\newcounter{cases}
\newcounter{subcases}[cases]
\newcounter{magicrownumbers}
\newcommand\rownumber{\refstepcounter{magicrownumbers}\arabic{magicrownumbers}}
\begin{document}

%\markboth{M.\ Patawar and K.\ Kapoor}{The Length of the Longest Sequence of Consecutive FS-double Squares}
\title{The Length of the Longest Sequence of Consecutive FS-double Squares in a word}% \\}
\author{Maithilee Patawar\inst{1} \and Kalpesh Kapoor\inst{2}}
\authorrunning{M.\ Patawar and K.\ Kapoor}
\institute{Department of Computer Science and Engineering\\
Indian Institute of Technology Guwahati\\
\email{maith176101104@iitg.ac.in} \and
Department of Mathematics\\
Indian Institute of Technology Guwahati\\
\email{kalpesh@iitg.ac.in}
}

\maketitle

\begin{abstract}
A square is a concatenation of two identical words, and a word $w$ is said to have a square $yy$ if $w$ can be written as $xyyz$ for some words $x$ and $z$. It is known that the ratio of the number of distinct squares in a word to its length is less than two and any location of a word could begin with at most two rightmost distinct squares. A square whose first location starts with the last occurrence of two distinct squares is an FS-double square. We explore and identify the conditions to generate a sequence of locations in a word that starts with FS-double squares. We first find the structure of the smallest word that begins with two consecutive FS-double squares and obtain its properties that enable to extend the sequence of FS-double squares. It is proved that the length of the longest sequence of consecutive FS-double squares in a word of length $n$ is at most $\frac{n}{7}$. We show that the squares in the longest sequence of consecutive FS-double squares are conjugates.
\end{abstract}
\keywords{Distinct squares; Double squares; Repetitions; Word combinatorics.}

\section{Introduction}
A word is a finite sequence of symbols or letters drawn from a nonempty finite set. A repetition in a word is of the form $u^m$ for some nonempty word $u$ and an integer $m>1$. The smallest repetition obtained with $m=2$ is known as a square and is a concatenation of two identical words. The study of squares reveals interesting properties of words, and is a well-researched topic in word combinatorics, see for example \cite{Cro81,Loth05}.

Fraenkel and Simpson \cite{FS98} conjectured that the number of distinct squares in a word is always less than its length. Suppose $w=a_1 a_2 \ldots a_n$ be a word. To give an upper bound on the number of distinct squares in a word, they considered the number, $k$, of distinct squares that can start from a letter $a_i$, $1 \leq i \leq n$, for the last time in the word $w$. It is known that the value of $k$ can be at most two \cite{FS98,ilie05}.

A word that starts with two rightmost distinct squares is called as FS-double square \cite{deza15}. %Thierry \cite{thiarx20} has classified an FS-double square in a word into seven types based on its relative position and length with respect to a previous FS-double square in the word. 
We show that a sequence of consecutive locations with FS-double squares in a word is possible only under specific conditions. Our goal in this study is to elaborate on the ways that maximize consecutive FS-double squares. In particular, the following are the main contributions of our work.
\begin{enumerate}[(a)]
	\item Identify the structure of a word having FS-double squares at consecutive locations.
	\item We give an upper bound for the maximum number of consecutive FS-double squares in a word.
\end{enumerate}
The rest of the paper is organized as follows. In the next section, we introduce basic definitions and give an overview of prior work. In Section \ref{sec:structure of 2FS}, we consider all possible conditions for two consecutive FS-double squares. The feasibility of these conditions are verified against the properties of an FS-double square, and it is shown that only two structures are feasible. Further, in Section \ref{sec:longest sequence of FS} we explore the properties of two structures discovered in the previous section and employ them to obtain the length of the longest sequence of FS-double squares. Finally, we summarize in Section \ref{sec:conclusion}.

\section{Definitions and Background}\label{sec:def}
A word is a concatenation of symbols drawn from an alphabet $\varSigma$. The concatenation of two words $w_1$ and $w_2$ is represented as $w_1 \cdot w_2$ or simply $w_1 w_2$. The `$\cdot$' operation is associative. The number of letters in a word $w$ is denoted by $|w|$. The word with length zero is denoted by $\epsilon$. The $i^{th}$ power of a word $w$, denoted by $w^i$, is defined recursively as $w^0 = \epsilon$ and $w^{i}=w^{i-1}.w$ for $i \geq 1$.

Define $\varSigma^*$ ($\varSigma^+$) as set of all (nonempty) words over $\varSigma$. If $w=pqr$ then the words $p$, $q$ and $r$ are called as prefix, factor and suffix, respectively, of $w$. A proper prefix (factor, suffix) of a word, $w$, is a prefix (factor, suffix) which is not equal to $\epsilon$ or $w$. The longest common prefix of two words $w_1$ and $w_2$ is denoted as $lcp(w_1,w_2)$. A word $w'$ is said to be a conjugate of a word $w$ if and only if $w=uv$ and $w'=vu$ for some $u,v \in \varSigma^*$.

A nonempty word, $w$, is a primitive word if whenever $w=u^k$ it implies that $k=1$. A non-primitive word, $w$, satisfies the relation $w=u^k$ for some nonempty word $u$ and an integer $k \geq 2$. A square is a word, $w$, of the form $uu$ where the nonempty word $u$ is referred to as root of $uu$. If the root $u$ is a primitive word, then the square $w$ is referred to as a primitively rooted square.

The set of distinct squares in a word $w$ is denoted by $DS(w)$. Let $w = a_1a_2\ldots a_{n}$ be a word. A square $u^2$ whose last occurrence in a word begins at a location $i$ is denoted as $u_i^2$. Define $s_i$ to be the number of those squares that start at location $i$ in $w$ which do not start at location $j$ where $i < j \leq n$. It is known that for any word the value of $s_i \leq 2$ for all $1 \leq i \leq n$. As a corollary, the number of distinct squares in a word of length $n$ is bounded by $2n$ for a word defined over any finite alphabet \cite{FS98}.

As mentioned above, for a given location $i$ the value of $s_i$, where $i \in \{1, \ldots, n\}$, can be either $0, 1$ or $2$. Deza et al.\ \cite{deza15} have studied the properties of words with $s_i = 2$, that is, words in which two rightmost distinct squares start at a location. They referred to the longer square that starts at location $i$ as ``FS-double Square''. For an FS-double square that begins at location $i$, we use $sq_i^2$ and $SQ_i^2$ to denote the shorter and the longer squares, respectively, which does not appear later at any location.

\begin{lemma}[Structure of an FS-double Square \cite{deza15}]\label{lem:2FS structure}
	The roots, $sq_i$ and $SQ_i$, of an FS-double square starting at location $i$ have the following structure:
	\begin{align}
	sq_i&=(x_1x_2)^{p_1}(x_1)\\
	SQ_i&=(x_1x_2)^{p_1}(x_1)(x_1x_2)^{p_2}
	\end{align}
	for some integer $p_1,p_2$ such that $p_1\geq p_2\geq 0$ and the non-empty words $x_1,x_2$ result into a primitive word $x_1x_2$.
\end{lemma}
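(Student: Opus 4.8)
\emph{Plan.} Write $u=sq_i$ and $U=SQ_i$, so that $u^2$ and $U^2$ are both prefixes of the suffix $a_ia_{i+1}\ldots a_n$ of $w$, with $|u|<|U|$. The strategy is to extract a common primitive period from the overlap of these two rightmost squares, and then to exclude the degenerate ``primitive power'' shapes by exhibiting a forbidden later occurrence of $sq_i^2$.

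First I would pin down the relative lengths. If $|U|\ge 2|u|$, then $u^2$ is a prefix of $U$, hence also a prefix of the second copy of $U$ inside $U^2$; since $U^2$ occurs in $w$ at location $i$, this produces an occurrence of $u^2$ at location $i+|U|>i$, contradicting that $sq_i^2$ has its last occurrence at $i$. Thus $|u|<|U|<2|u|$, so $U$ is a proper prefix of $u^2$; write $U=uy$ with $y=u[1..r]$ and $r:=|U|-|u|\in\{1,\ldots,|u|-1\}$. Comparing the second half of $u^2$ with the matching factor of $U^2=uyuy$ gives $u[r+1..|u|]=u[1..|u|-r]$, i.e.\ $u$ has period $r$.

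The key move is to choose the period carefully: let $d$ be the \emph{smallest} period of $u$ that divides $r$ (well defined, since $r$ itself is such a period), and put $z:=u[1..d]$. If $z$ were imprimitive, say $z=e^{k}$ with $k\ge 2$, then $u$ (a concatenation of copies of $z$ followed by a prefix of $z$) would inherit the period $|e|$ of $z$, and $|e|\mid d\mid r$, contradicting minimality of $d$; so $z$ is primitive. Since $u$ has period $d$ and $d\mid r\le|u|$, the length-$r$ prefix $y$ of $u$ equals $z^{p_2}$ with $p_2=r/d\ge 1$. Writing $|u|=p_1d+\rho$ with $0\le\rho<d$ we get $u=z^{p_1}\bigl(z[1..\rho]\bigr)$, and $p_1\ge 1$ because $|u|>r\ge d$. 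Next I would rule out $\rho=0$: in that case $u=z^{p_1}$ and $U=uy=z^{p_1+p_2}$, so $w$ begins at $i$ with $U^2=z^{2(p_1+p_2)}$, whence $sq_i^2=u^2=z^{2p_1}$ reappears at location $i+d>i$, which is impossible. Hence $\rho\ge 1$; set $x_1:=z[1..\rho]$ and $x_2:=z[\rho+1..d]$, which are nonempty with $x_1x_2=z$ primitive. Then $sq_i=u=(x_1x_2)^{p_1}x_1$ and $SQ_i=U=uy=(x_1x_2)^{p_1}x_1(x_1x_2)^{p_2}$, and finally $p_2d=r<|u|<(p_1+1)d$ forces $p_2\le p_1$, which completes the argument.

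The genuinely delicate point is the choice of $d$: taking the shortest period of $u$ would not suffice, because $r$ need not be a multiple of it, so $y$ could fail to be a power of $z$; insisting that $d$ divide $r$ is precisely what makes the periodicity bookkeeping go through with no Fine--Wilf-type length estimate. The remaining care is routine: in the two ``recurrence'' steps one must check that the unwanted copy of $sq_i^2$ really sits inside the already-known prefix $U^2$ of $w$ and strictly to the right of location $i$, which follows from $|U^2|\le n-i+1$ together with $2|u|\le|U|$ in the first step and $p_2\ge 1$ in the second.
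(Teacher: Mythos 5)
Your argument is correct, and all the delicate points are handled: the rightmost-occurrence hypothesis is used exactly where it is needed (to force $|sq_i|<|SQ_i|<2|sq_i|$, and to exclude the case $sq_i=(x_1x_2)^{p_1}$ with empty $x_1$), the choice of $d$ as the least period of $sq_i$ dividing $r=|SQ_i|-|sq_i|$ does make $z=sq_i[1..d]$ primitive and $y=z^{p_2}$ without any Fine--Wilf estimate, and the length bookkeeping $p_2d=r<|sq_i|<(p_1+1)d$ gives $p_1\geq p_2\geq 1$, which is consistent with (indeed stronger than) the stated $p_1\geq p_2\geq 0$. Note, however, that the paper does not prove this lemma at all: it is imported verbatim from Deza et al.\ \cite{deza15}, so there is no internal proof to match. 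Compared with the cited source, which obtains this structure as part of a canonical factorization of (balanced) double squares developed through synchronization and primitivity lemmas, your derivation is more elementary and self-contained, using only the definition of rightmost squares plus a carefully chosen period; what the heavier machinery of \cite{deza15} buys in exchange is the finer information (uniqueness of the factorization, the inversion-factor apparatus, the mate classification) that the present paper also relies on later, and which bare existence of the factorization does not supply.
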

It is shown in \cite{deza15} that a word of length $n$ can have at most $\lfloor{\frac{5n}{6}}\rfloor$ distinct FS-double squares. This result, in turn, gives a bound of $\lfloor{\frac{11n}{6}}\rfloor$ on the number of distinct squares. The earlier work \cite{deza15} has focused on the characteristics of a pair of FS-double squares separated by one or more locations. In this paper, we restrict the study to FS-double squares that appear at consecutive locations. This has helped in the exact characterization of the structure of words with the longest chain of FS-double squares.

\subsection{Mates of an FS-double Square}\label{subsec:mates}
For an FS-double square $SQ_1^2$, Deza et al. \cite{deza15} categorized another FS-double square $SQ_k^2,k>1$ into five types based on the value of $k$ and the sizes of roots $\{|sq_1|,|SQ_1|\},\{|sq_k|,|SQ_k|\}$ . For $k<(p_1-1)|x_1x_2|+|lcp(x_1x_2,x_2x_1)|$, an FS-double square $SQ_k^2$ is one of the following mates of $SQ_1^2$ if it satisfies the respective conditions \cite{deza15}.
\begin{enumerate}[(a)]
	\item $\alpha$ mate: The longer root $SQ_i$ is conjugate of $SQ_1$ which gives $|SQ_1|=|SQ_k|$.
	\item $\beta$ mate: The root $sq_k=(\Tilde{x})^i(\Tilde{x_1})$ where $1<i<p_1$ and $\Tilde{x}$ respective $\Tilde{x_1}$ is conjugate of $x_1x_2$ respectively $x_1$. This implies $|sq_1|<|sq_k|$, $|SQ_1|=|SQ_k|$.
	\item $\gamma$ mate: Here, $k<p_1|x_1x_2|$ and  $|sq_k|=|SQ_1|$.
	\item $\delta$ mate: The lengths of the roots satisfy $|sq_k|>|SQ_1|$ (The complete definition of $\delta$ mates given in \cite{deza15} is discussed later in Definition \ref{def:delta}).
\end{enumerate}
An $\epsilon$ mate of $SQ_1^2$ starts after $(p_1-1)|x_1x_2|+|lcp(x_1x_2,x_2x_1)|$ locations. Since we are interested in consecutive FS-double squares, we omit to discuss this mate here. The two FS-double squares beginning at adjacent locations are definitely one of the above four mates. It is easier to show that these FS-double squares can be $\alpha$ mates compare to other possible mates. However, we need more details of FS-double squares starting at consecutive locations to check them against the definitions of $\beta$ , $\gamma$ and $\delta$ mates. In the next section, we show that these FS-double squares are either $\alpha$ or $\delta$ mates and cannot be $\beta$ or $\gamma$ mates.

\section{Structure of 2FS Squares}\label{sec:structure of 2FS}
We refer to a word starting at location $i$ with $s_i = s_{i+1} = 2$ as a 2FS square. Let $w = a_1 \ldots a_{n}$ be a 2FS square in which $s_1 = s_2 = 2$ with $(sq_1, SQ_1)$ and $(sq_2, SQ_2)$ be the two respective pairs of roots of FS-double squares. As a convention, for an FS-double square, we assume that $|sq_1| < |SQ_1|$ and $|sq_2| < |SQ_2|$.

To identify the structure of a 2FS square, we consider all possible relations between the lengths of $sq_1$, $sq_2$, $SQ_1$ and $SQ_2$. We show that, in some 2FS squares, the two consecutive FS-double squares are conjugates.

A square $SQ^2$ is labelled as a balanced double square if one of its prefix is a square, say $sq^2$, and the roots satisfy the relation $|sq| < |SQ| < 2|sq|$ \cite{deza15}. Similar to an FS-double square, a balanced double square starts with two nonempty squares $(sq^2,SQ^2)$, but their occurrence in a word is not necessarily the last. A canonical factorization of a balanced double square is given in \cite{deza15}. We use the following existing results of balanced double squares to characterize 2FS squares.

\begin{lemma}[Balanced Double Square \cite{deza15}]\label{balancedDB}
	A square, $SQ^2$, that starts with another shorter square, $sq^2$, is a balanced double square if one of the following conditions holds:
	\begin{enumerate}[(a)]
		\item either $sq$ or $SQ$ is primitively rooted, or
		\item \label{conb} prefix $sq^2$ is the last occurrence in $SQ^2$.
	\end{enumerate}
\end{lemma}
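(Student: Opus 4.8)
The plan is to argue by contradiction: assume $SQ^2$ starts with the shorter square $sq^2$ but that $SQ^2$ is \emph{not} balanced, i.e.\ $|SQ|\ge 2|sq|$, and contradict each of~(a) and~(b). The first step is a factorization. Since $|sq^2|=2|sq|\le|SQ|$, the prefix $sq^2$ lies entirely inside the first copy of $SQ$, so $SQ=sq^2\,t$ for some (possibly empty) word $t$, and therefore $SQ^2=sq^2\,t\,sq^2\,t$. From this I record two facts: (i) the prefix $sq^2$ occurs again inside $SQ^2$, starting at position $|SQ|+1>1$; and (ii) $(sq^2)^2=sq^4$ is a prefix of $SQ^2$, so $sq^4$ is a square starting at the same location as $sq^2$ and $SQ^2$, whose length $4|sq|$ satisfies $|sq^2|<4|sq|<|SQ^2|$ whenever $|SQ|>2|sq|$.

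Condition~(b) is then immediate: fact~(i) exhibits an occurrence of $sq^2$ inside $SQ^2$ other than the prefix one, contradicting the hypothesis that the prefix occurrence is the last. For condition~(a) I would split on which root is primitively rooted. If $SQ$ is primitively rooted, then $t\ne\epsilon$, since otherwise $SQ=sq\cdot sq$ is a square and not primitive; hence $|SQ|>2|sq|$, and fact~(ii) produces a square $sq^4$ whose length lies strictly between those of $sq^2$ and $SQ^2$, contradicting the fact that $sq^2$ and $SQ^2$ are the two squares associated with the double square. If instead $sq$ is primitively rooted, the factorization of $SQ$ through $sq$ is no longer free, so I would first extract periodicity: $SQ^2$ has period $|SQ|$ throughout, while its prefix of length $4|sq|$ equals $sq^4$ and hence has period $|sq|$; using that $sq$ is primitive (so a copy of $sq$ cannot sit in a ``bad'' alignment relative to another) together with a Fine--Wilf type synchronisation argument on the overlap of these two periods, one forces $|sq|$ to divide $|SQ|$. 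Then $SQ=sq^{m}$ with $m\ge 2$, so $SQ$ is not primitive and $sq^4$ is again a square of intermediate length, yielding the same contradiction; the boundary case $|SQ|=2|sq|$ is disposed of directly, since there $SQ=sq^2$.

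The main obstacle is precisely the sub-case of~(a) in which $sq$ is primitive but $SQ$ need not be: here no immediate factorization is available, and the whole argument hinges on correctly pushing the overlap of the periods $|sq|$ and $|SQ|$ — invoking primitivity of $sq$ to rule out the spurious alignments — until $|sq|\mid|SQ|$ is forced. Once that divisibility is in hand, every remaining case collapses to exhibiting the intermediate square $sq^4$ and contradicting the defining pair of the double square, and the conclusion $|sq|<|SQ|<2|sq|$ follows.
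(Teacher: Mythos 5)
The paper does not actually prove this lemma; it is quoted from \cite{deza15}, and the only part the paper ever uses is clause (b) (``an FS-double square is a balanced double square since it satisfies the constraint (b)''). Your treatment of (b) is fine and is the standard argument: if $|SQ|\ge 2|sq|$ then $sq^2$ is a prefix of $SQ$, hence reoccurs at position $|SQ|+1$ in $SQ^2$, so the prefix occurrence cannot be the last one.

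For (a), however, there is a genuine gap. Your ``fact (ii)'' is false: from $SQ=sq^2t$ you only get $SQ^2=sq^2\,t\,sq^2\,t$, and $sq^4$ is a prefix of $SQ^2$ only when $t=\epsilon$; e.g.\ for $sq=ab$, $SQ=ababc$ the prefix of $SQ^2=ababcababc$ of length $4|sq|$ is $ababcaba\neq (ab)^4$. This removes the basis for the Fine--Wilf synchronisation in your ``$sq$ primitive'' sub-case, and the contradiction you aim for --- an intermediate square $sq^4$ ``contradicting the defining pair'' --- is not a contradiction of anything assumed: the hypotheses nowhere forbid further squares starting at the same position. More fundamentally, with ``balanced'' defined as $|sq|<|SQ|<2|sq|$, condition (a) as transcribed cannot by itself force balancedness: in the same example both $sq$ and $SQ$ are primitively rooted, $sq^2$ is a prefix of $SQ^2$, yet $|SQ|>2|sq|$. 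So no argument along your lines (or any other) can close case (a) from the stated hypotheses alone; the missing ingredient is the additional context carried by the double-square setting in \cite{deza15} (in this paper, the rightmost-occurrence property of FS-double squares, i.e.\ clause (b)), which is what actually yields $|sq|<|SQ|<2|sq|$.
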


An FS-double square is a balanced double square since it satisfies the constraint (\ref{conb}) of Lemma \ref{balancedDB}. Therefore, the following \textit{two squares factorization lemma} for balanced double squares is applicable for FS-double squares as well.

\begin{lemma}[Two Squares Factorization Lemma \cite{bai16NPL}]\label{lem:primitiveBases}
	Given an FS-double square, $SQ_i^2$, that begins with a shorter square, $sq_i^2$, such that $sq_i=(x_1x_2)^{p_1}x_1$ and $SQ_i=sq_i(x_1x_2)^{p_2}$ for some nonempty words $x_1,x_2$ and integers $p_1\geq p_2\geq 0$ then,
	\begin{enumerate}[(a)]
		\item $SQ_i$ is primitively rooted, and
		\item $sq_i$ is primitively rooted if $p_1>1$.
	\end{enumerate}
\end{lemma}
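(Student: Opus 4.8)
The plan is to argue by contradiction in both parts. If the root in question is not primitive, write it as $e^{f}$ for a primitive word $e$ and some integer $f\ge 2$, and derive a contradiction with the primitivity of $x_1x_2$ using the periodicity lemma of Fine and Wilf — a word of length $m$ that has periods $r$ and $s$ with $r+s-\gcd(r,s)\le m$ has period $\gcd(r,s)$ — together with the fact that, since $SQ_i^2$ satisfies condition~(\ref{conb}) of Lemma~\ref{balancedDB}, the prefix $sq_i^2$ is the \emph{only} occurrence of $sq_i^2$ inside $SQ_i^2$. Throughout I write $q:=|x_1x_2|$; note $0<|x_1|<q$, and that $|sq_i|<|SQ_i|$ forces $p_2\ge 1$, hence $p_1\ge p_2\ge 1$ and $p_1+p_2\ge 2$.

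For part~(b), suppose $p_1>1$ but $sq_i=(x_1x_2)^{p_1}x_1=e^{f}$. Then $sq_i$ has the period $|e|\le|sq_i|/2$, and it also has the period $q$ because it is a prefix of $(x_1x_2)^{p_1+1}$. The first step is to check that $p_1\ge 2$ makes $q<|sq_i|/2$, so the hypothesis $q+|e|-\gcd(q,|e|)\le|sq_i|$ of the periodicity lemma holds and $sq_i$ has period $d:=\gcd(q,|e|)$. Its length-$q$ prefix $x_1x_2$ then has period $d$ with $d\mid q$; since $x_1x_2$ is primitive this forces $d=q$, i.e.\ $q\mid|e|$, hence $q\mid|sq_i|=p_1q+|x_1|$, and therefore $q\mid|x_1|$ — impossible since $0<|x_1|<q$.

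For part~(a), suppose $SQ_i=(x_1x_2)^{p_1}x_1(x_1x_2)^{p_2}=e^{f}$, so $SQ_i^2=e^{2f}$ has period $|e|\le|SQ_i|/2$. I would split on the size of $p_1+p_2$. When $p_1+p_2\ge 3$, I would apply the periodicity lemma to the block $B:=(x_1x_2)^{p_1+p_2}$ that occupies the middle of $SQ_i^2=(x_1x_2)^{p_1}x_1(x_1x_2)^{p_1+p_2}x_1(x_1x_2)^{p_2}$: $B$ has period $q$ and, as a factor of $SQ_i^2$, also has period $|e|$, and the inequality $q+|e|-\gcd(q,|e|)\le|B|=(p_1+p_2)q$ is exactly what $p_1+p_2\ge 3$ (together with $|x_1|<q$) provides. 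As in part~(b) this yields $q\mid|e|$, hence $q\mid|SQ_i|=(p_1+p_2)q+|x_1|$, so $q\mid|x_1|$, a contradiction. When $p_1+p_2=2$, i.e.\ $p_1=p_2=1$, we have $SQ_i=x_1x_2x_1x_1x_2$ and $sq_i=x_1x_2x_1$; here $2|sq_i|\le\tfrac32|SQ_i|\le 2|SQ_i|-|e|$, so a copy of $sq_i^2$ still fits at position $1+|e|$ of $SQ_i^2$, and the period-$|e|$ structure of $SQ_i^2=e^{2f}$ then produces an occurrence of $sq_i^2$ inside $SQ_i^2$ other than the prefix, contradicting condition~(\ref{conb}) of Lemma~\ref{balancedDB}.

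The length estimates needed to feed the periodicity lemma are routine bookkeeping. The one genuinely delicate point is part~(a): the ``shift by $|e|$'' reoccurrence argument that settles $p_1=p_2=1$ fails once $|sq_i|$ is close to $|SQ_i|$, because then a shifted copy of $sq_i^2$ no longer fits inside $SQ_i^2$; this is precisely why the case $p_1+p_2\ge 3$ has to be attacked instead through the internal period-$q$ block $B$, and verifying that these two sub-cases exhaust all $p_1\ge p_2\ge 1$ is the thing to get right.
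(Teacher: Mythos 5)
Your argument is correct, but there is nothing in the paper to compare it against: the paper imports this lemma verbatim from the cited reference \cite{bai16NPL} and never proves it, so what you have written is a self-contained replacement for an external citation rather than a variant of an in-paper proof. Your route — Fine–Wilf applied once to $sq_i$ (part (b)) and once to the middle block $(x_1x_2)^{p_1+p_2}$ of $SQ_i^2$ (part (a), $p_1+p_2\ge 3$), plus the shift-by-$|e|$ re-occurrence of $sq_i^2$ contradicting condition (\ref{conb}) of Lemma \ref{balancedDB} when $p_1=p_2=1$ — checks out: the length estimates $q<|sq_i|/2$ for $p_1\ge 2$, $q+|e|-\gcd(q,|e|)\le (p_1+p_2)q$ for $p_1+p_2\ge 3$, and $2|sq_i|+|e|\le 2|SQ_i|$ for $p_1=p_2=1$ are all valid, the two sub-cases of (a) do exhaust $p_1\ge p_2\ge 1$ (and $p_2\ge 1$ is forced by $|sq_i|<|SQ_i|$, as you note), and the final step $q\mid |x_1|$ versus $0<|x_1|<q$ is a genuine contradiction. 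One point you should make explicit rather than tacit: the statement of the lemma as printed does not repeat that $x_1x_2$ is primitive; your proof needs this, and it is legitimate only because the factorization is the canonical one of Lemma \ref{lem:2FS structure}, where primitivity of $x_1x_2$ is part of the structure — without that assumption the Fine–Wilf step that forces $\gcd(q,|e|)=q$ collapses. With that sentence added, your proof stands on its own and is arguably a useful addition, since the paper otherwise leaves the reader to chase the reference.
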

The following Lemma gives a way to introduce distinct squares of equal lengths that start at consecutive locations by extending a primitively rooted square. Further, the obtained squares are conjugates of each other.
\begin{lemma}[\cite{mai21}]\label{lem:appendingPrefix}
	Let $w=uu$ be a primitively rooted square such that $|u|>1$ and $v$ be a proper prefix of $u$. Then, the suffix $v$ in the word $wv$ introduces $|v|$ distinct conjugates of $uu$.
\end{lemma}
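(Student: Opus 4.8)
The final statement to prove is Lemma~\ref{lem:appendingPrefix}: if $w=uu$ is a primitively rooted square with $|u|>1$ and $v$ is a proper prefix of $u$, then appending $v$ to $w$ creates $|v|$ new distinct squares, each a conjugate of $uu$.

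\textbf{Proof plan.} The plan is to exhibit, for each position $j$ with $1 \le j \le |v|$, a specific square of length $2|u|$ occurring in $wv = uuv$ ending exactly at the appended suffix, and then argue these squares are (i) conjugates of $uu$, (ii) pairwise distinct, and (iii) genuinely new, i.e.\ they do not occur already in $w=uu$. First I would set up notation: write $u = u_1 u_2 \cdots u_{|u|}$ and for $1 \le j \le |v|$ let $u = s_j t_j$ where $s_j$ is the length-$j$ prefix of $u$ (which is a prefix of $v$, hence of $u$, so this is well-defined) and $t_j$ the corresponding suffix. The claim is that $uuv$ contains the factor $(t_j s_j)^2$ starting at position $j+1$: indeed $uuv = s_j\,(t_j s_j)(t_j s_j)\, t_j' $ where $v = s_j v'$ and one checks the middle $2|u|$ letters spell $(t_j s_j)^2$ because the letters of $v$ beyond... actually more carefully, $uuv$ read from position $j+1$ gives $t_j \cdot s_j t_j \cdot s_j \cdots$, and since $v$ has $s_j$ as a prefix, the first $|v| \ge j$ letters of the trailing $v$ begin with $s_j$; so positions $j+1$ through $j+2|u|$ of $uuv$ read $t_j s_j t_j s_j = (t_j s_j)^2$. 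Thus each $(t_j s_j)^2$ is a square occurring in $wv$, and $t_j s_j$ is a conjugate of $u$ (it is $u$ cyclically rotated by $j$), so $(t_j s_j)^2$ is a conjugate of $uu$.

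Next I would establish distinctness among the $|v|$ squares. Since $u$ is primitive, its $|u|$ conjugates are pairwise distinct (a standard fact: a word with a repeated conjugate is non-primitive), so $t_1 s_1, \ldots, t_{|v|} s_{|v|}$ are distinct, hence the squares $(t_j s_j)^2$ are distinct. Finally, and this is the step I expect to be the crux, I must show none of these squares already occurs in $w = uu$ itself — otherwise appending $v$ would not \emph{introduce} them as new. The natural argument: if $(t_j s_j)^2$ occurred inside $uu$, it would have to start at some position $i$ with $1 \le i \le 1$ (since its length is $2|u|$ and $|uu| = 2|u|$, the only occurrence in a word of length $2|u|$ starts at position $1$), forcing $(t_j s_j)^2 = uu$, i.e.\ $t_j s_j = u = s_j t_j$. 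But $s_j t_j = t_j s_j$ with $s_j, t_j$ both nonempty (here I use $1 \le j \le |v| < |u|$, so $t_j \ne \epsilon$, and $j \ge 1$ so $s_j \ne \epsilon$) implies, by the Fine--Wilf / commutation lemma, that $s_j$ and $t_j$ are powers of a common word, making $u = s_j t_j$ non-primitive — contradiction. Hence no $(t_j s_j)^2$ occurs in $uu$, so all $|v|$ of them are new occurrences introduced by the suffix $v$.

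\textbf{Main obstacle.} The routine parts are the conjugacy and the indexing of where each square sits in $uuv$; the genuinely delicate point is ruling out prior occurrences, which is why primitivity of $u$ is essential in the hypothesis — it is used twice, once for pairwise distinctness of the conjugates and once (via commutation) to forbid an occurrence inside $uu$. I would also double-check the boundary behaviour at $j = |v|$ to confirm the last new square still ends within $wv$ and uses only the letters of $v$ that are actually present, which holds precisely because $v$ is a \emph{proper} prefix of $u$ so $j \le |v| \le |u| - 1$ and $t_j$ remains nonempty throughout.
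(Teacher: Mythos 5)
The paper does not prove this lemma itself; it is quoted from the cited reference \cite{mai21} without proof, so there is no in-paper argument to compare against. Your proof is correct and is the standard one: the factorization $u=s_jt_j$ with $|s_j|=j\le|v|$ yields the occurrence of $(t_js_j)^2$ at position $j+1$ of $uuv$ because $v$ (hence its length-$j$ prefix) agrees with $s_j$; primitivity of $u$ gives both the pairwise distinctness of the rotations $t_js_j$ and, via the commutation argument $s_jt_j\neq t_js_j$, the fact that none of these squares equals (or already occurs in) $uu$, the only candidate position being forced by the length count $|(t_js_j)^2|=|uu|$. Your boundary check that $j\le|v|<|u|$ keeps $t_j$ nonempty and the occurrence inside $wv$ is exactly the point where the hypotheses ``$v$ a proper prefix'' and $|u|>1$ are used, so the argument is complete.
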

It is also possible to extend a non-primitive square to obtain new squares. However, the newly introduced squares may not be the distinct squares. In general, the new squares introduced in both the cases are conjugates of the given square.
\begin{lemma}\label{lem:EqualLenConseSq}
	Let $uu$ and $vv$ be two squares with $|u|=|v|$ that appear at consecutive locations in a word. Then, the pairs ($u$, $v$) and ($uu$, $vv$) are conjugates. Further, $ua = av$ and $uua = avv$ for some $a \in \varSigma$.
\end{lemma}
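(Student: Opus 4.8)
The plan is to argue directly from the letter‑by‑letter overlap of the two squares. Say the occurrence of $uu$ starts at location $i$ (the other orientation being symmetric, with the roles of the two pairs swapped). Since $vv$ occurs at the consecutive location $i+1$ and $|u|=|v|=m$, the two occurrences together span the factor $a_i a_{i+1}\cdots a_{i+2m}$, with $uu = a_i\cdots a_{i+2m-1}$ and $vv = a_{i+1}\cdots a_{i+2m}$. First I would record the two ``periodicity'' facts coming from each square being a square: reading $uu$ as $u\cdot u$ gives $a_{i+j}=a_{i+m+j}$ for $0\le j\le m-1$, and reading $vv$ as $v\cdot v$ gives $a_{i+1+j}=a_{i+m+1+j}$ for $0\le j\le m-1$.

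The key observation is that these two chains of equalities pin down the three ``anchor'' letters at the ends and the midpoint of the spanned factor: the first chain with $j=0$ gives $a_i=a_{i+m}$, and the second chain with $j=m-1$ gives $a_{i+m}=a_{i+2m}$. Set $a:=a_i=a_{i+m}=a_{i+2m}$ and $r:=a_{i+1}\cdots a_{i+m-1}$. Then the first half of $uu$ reads $u=ar$ (using $a_i=a$), while the first half of $vv$ reads $v=a_{i+1}\cdots a_{i+m}=ra$ (using $a_{i+m}=a$). Hence $ua = ara = av$, which both exhibits $u$ and $v$ as conjugates (indeed $v$ is the cyclic rotation of $u$ by one letter) and gives the promised identity; the degenerate case $m=1$ is covered uniformly, with $r=\epsilon$.

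For the squares themselves I would simply compute $uua = (a_i\cdots a_{i+2m-1})a = a_i\cdots a_{i+2m-1}a_{i+2m} = a_i\cdots a_{i+2m}$, using $a=a_{i+2m}$, and likewise $avv = a_i(a_{i+1}\cdots a_{i+2m}) = a_i\cdots a_{i+2m}$; therefore $uua=avv$. Invoking the standard fact that $xc=cy$ with $|x|=|y|$ and $c\in\varSigma$ forces $x$ and $y$ to be conjugate (write $x=cx'$, then $xc=cx'c$ yields $y=x'c$), we conclude that $uu$ and $vv$ are conjugates as well.

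I do not expect a genuine obstacle: the whole content is the short observation that two equal‑length squares overlapping in all but one position must agree on the letters $a_i$, $a_{i+m}$ and $a_{i+2m}$, and everything else is bookkeeping. The only things to be careful about are the index shift by one between the two occurrences and the two applications of the ``$xc=cy$'' rotation fact. One could alternatively phrase the first part via Lemma~\ref{lem:appendingPrefix}, viewing $vv$ as arising by appending the letter $a_{i+2m}$ to $uu$ (and to its conjugates), but the direct computation above is self‑contained and shorter.
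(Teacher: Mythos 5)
Your proof is correct and is essentially the paper's own argument in positional notation: where you extract the anchor letter $a=a_i=a_{i+m}=a_{i+2m}$ and write $u=ar$, $v=ra$, the paper writes $u=au'$, $v=v'b$ and compares the overlap relation $uub=avv$ to conclude $u'=v'$ and $a=b$, which is the same rotation-by-one observation. No gaps; the explicit use of the ``$xc=cy$ implies conjugacy'' fact is just a spelled-out version of what the paper leaves implicit.
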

\begin{proof}
	Let $uu$ and $vv$ be two consecutive squares in a word where the former square ends first. Assume $u$ begins with a letter $a$ and $v$ ends with a letter $b$ such that $u=au',v=v'b$. The squares $uu$ and $vv$ appear at consecutive locations leading to a relation $uub=avv$. So, the highlighted factors in $a\mathbf{u'a}u'b=uub$ and $a\mathbf{v'b}v'b=avv$ must be equal. Therefore, $u'=v'$, $a=b$, $uu=av'av'$ and $vv=v'av'a$. Thus, the squares $uu$ and $vv$ are conjugates.
\end{proof} In Subsection \ref{subsec:mates}, we discussed the definitions of different `mates' that are given in \cite{deza15}. Accordingly, an $\alpha$ mate of an FS-double square $SQ_1^2$ refers to an FS-double square $SQ_k^2$ where $k>1$ and $|sq_1|=|sq_k|,|SQ_1|=|SQ_k|$. 
In the next subsection we show that for $k=2$, $|SQ_1|=|SQ_2|\iff |sq_1|=|sq_2|$.

\subsection{Equal 2FS Squares}\label{subsec:Equal2FS}
A 2FS square starts with two FS-double squares, say $SQ_1^2$ and $SQ_2^2$, where
$$
\begin{array}{ll}
SQ_1 = & (x_1x_2)^{p_1}(x_1)(x_1x_2)^{p_2}\\
SQ_2 = & (y_1y_2)^{q_1}(y_1)(y_1y_2)^{q_2}
\end{array}
$$
In the rest of the paper, we assume that $SQ_1$ begins with a letter $a$ such that $x_1=ax_1'$.
\begin{lemma}\label{lem:SQ0=SQ1}
	Let $w$ be a 2FS square with  $|SQ_1|=|SQ_{2}|$. Then, $|sq_1|=|sq_{2}|$.
\end{lemma}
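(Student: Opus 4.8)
The plan is to exploit Lemma \ref{lem:EqualLenConseSq} applied to the two longer squares $SQ_1^2$ and $SQ_2^2$, together with the canonical factorisations from Lemma \ref{lem:2FS structure}. Since $|SQ_1|=|SQ_2|$ and these squares sit at consecutive locations, Lemma \ref{lem:EqualLenConseSq} gives $SQ_1 a = a SQ_2$ for the leading letter $a$ of $SQ_1$, and in particular $SQ_2$ is the conjugate of $SQ_1$ obtained by cyclically shifting one position. Writing $SQ_1=(x_1x_2)^{p_1}x_1(x_1x_2)^{p_2}$ with $x_1=ax_1'$, this shift produces $SQ_2 = x_1'(x_2x_1)^{?}\cdots a$; the point is that $SQ_2$ has the same length and a tightly constrained form.

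Next I would bring in the structure of $SQ_2$ as an FS-double square in its own right: by Lemma \ref{lem:2FS structure}, $SQ_2=(y_1y_2)^{q_1}y_1(y_1y_2)^{q_2}$ with $y_1y_2$ primitive, and by Lemma \ref{lem:primitiveBases}(a), $SQ_2$ is primitively rooted (and so is $SQ_1$). Now $SQ_2$ is a conjugate of $SQ_1$, so the primitive roots $x_1x_2$ and $y_1y_2$ — once one identifies the relevant periods of $SQ_1$ and $SQ_2$ — must be conjugate and hence $|x_1x_2|=|y_1y_2|$; combined with $|SQ_1|=|SQ_2|$ this forces the exponent data to match, i.e. $p_1=q_1$ and $p_2=q_2$ (using $p_1\ge p_2\ge 0$, $q_1\ge q_2\ge 0$ and the fact that $|SQ_i|=(2p_i+p_{2,i}+1)|x_1x_2|+\ldots$ pins down the decomposition once the period length is fixed — here I would use the Fine–Wilf / synchronisation property of primitive words to rule out a second factorisation of a different shape). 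Once $p_1=q_1$, $p_2=q_2$ and $|x_1x_2|=|y_1y_2|$, we get $|sq_1|=(p_1+1)|x_1|+p_1|x_2|$ expressed purely in terms of the shared period length and exponents; the same expression computes $|sq_2|$ from $q_1=p_1$ and $|y_1|,|y_2|$. The remaining gap is that $|x_1|$ and $|y_1|$ need not be literally equal a priori, only $|x_1x_2|=|y_1y_2|$.

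To close that gap I would argue directly on positions. We have $sq_1$ a prefix of $SQ_1$ and $sq_2$ a prefix of $SQ_2$, with $SQ_2$ the one-letter cyclic shift of $SQ_1$. The prefix $x_1$ of $SQ_1$ begins with $a$; the cyclic shift deletes this leading $a$ and re-appends it, so the first occurrence of the letter $a$ in a boundary-revealing position of $SQ_2$ locates where the analogue of $x_1$ must sit. More concretely, both $SQ_1$ and $SQ_2$ have period $|x_1x_2|$ on a long prefix (all of $SQ_1$ minus its last $|x_1|$ letters is periodic with this period, by the $(x_1x_2)^{p_1}x_1(x_1x_2)^{p_2}$ shape), and a shift by one of a word with period $d$ has period $d$ on the shifted overlap; hence $y_1y_2$, being the primitive period of $SQ_2$, is the length-$|x_1x_2|$ factor $x_2'x_1x_2''$ where $x_1=ax_1'$ — I would make this explicit and read off that $|y_1|=|x_1|$ by matching the location of the "defect" (the spot where the pure period $(x_1x_2)^\infty$ is interrupted by the extra $x_1$) in both words, which moves by exactly one letter under the shift. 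From $|y_1|=|x_1|$ and $|x_1x_2|=|y_1y_2|$ we then get $|y_2|=|x_2|$, and therefore $|sq_2|=(q_1+1)|y_1|+q_1|y_2|=(p_1+1)|x_1|+p_1|x_2|=|sq_1|$.

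The main obstacle I expect is not any single computation but the bookkeeping of \emph{which} period and \emph{which} factorisation one is comparing: $SQ_1$ admits the canonical factorisation through $x_1,x_2,p_1,p_2$, but after a cyclic shift the natural canonical factorisation of $SQ_2$ is through $y_1,y_2,q_1,q_2$, and one must show these two descriptions are related by "shift the defect by one letter" rather than by some genuinely different decomposition. The tool for this is the uniqueness of the primitive root (and the synchronising property of primitive words), which guarantees that the period $|x_1x_2|$ detected in $SQ_2$ really is $|y_1y_2|$ and not a proper multiple or an unrelated value; once that uniqueness is in hand, the equality $|sq_1|=|sq_2|$ follows by the positional matching sketched above. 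A clean alternative, if the direct positional argument gets unwieldy, is to invoke Lemma \ref{lem:EqualLenConseSq} a second time — this time aiming to show $sq_1$ and $sq_2$ are themselves consecutive equal-length squares — but that presupposes $|sq_1|=|sq_2|$, so it can only be used to \emph{verify} consistency, not to derive the equality; hence I would keep the period/defect argument as the backbone.
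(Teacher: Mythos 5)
Your skeleton is the same as the paper's: apply Lemma~\ref{lem:EqualLenConseSq} to the two equal-length longer squares at consecutive positions to conclude that $SQ_2$ is the one-letter cyclic shift of $SQ_1$, i.e.\ $SQ_2=(x_1'x_2a)^{p_1}(x_1'a)(x_1'x_2a)^{p_2}$, and then identify the canonical factorization of the FS-double square at position $2$ with this shifted factorization, which immediately gives $|sq_2|=p_1|x_1x_2|+|x_1|=|sq_1|$. The paper performs the second step in a single line (``use Lemma~\ref{lem:2FS structure}''), silently taking the factorization of $SQ_2$ to be the shifted one; you correctly sense that this identification is the real content and try to supply an argument for it.

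The argument you supply, however, has concrete problems. First, the claim that ``all of $SQ_1$ minus its last $|x_1|$ letters is periodic with period $|x_1x_2|$'' is false: with $x_1=a$, $x_2=b$, $p_1=p_2=1$ we have $SQ_1=abaab$, and deleting the last letter leaves $abaa$, which does not have period $2$; the $|x_1x_2|$-periodic prefix is only $(x_1x_2)^{p_1}x_1$, and the defect sits at the $x_1x_1x_2$ junction. Second, and more importantly, calling $y_1y_2$ ``the primitive period of $SQ_2$'' assumes what is to be proved: $y_1,y_2,q_1,q_2$ are defined via Lemma~\ref{lem:2FS structure} from the \emph{pair} $(sq_2,SQ_2)$, i.e.\ from the unknown shorter square, not intrinsically from the word $SQ_2$; likewise $x_1x_2$ is not the primitive root of $SQ_1$ (by Lemma~\ref{lem:primitiveBases}, $SQ_1$ itself is primitive), so ``conjugate words have conjugate primitive roots'' does not give $|x_1x_2|=|y_1y_2|$. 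What your plan actually requires is a uniqueness statement: the word $(x_1'x_2a)^{p_1}(x_1'a)(x_1'x_2a)^{p_2}$ admits no factorization $(y_1y_2)^{q_1}y_1(y_1y_2)^{q_2}$, with $y_1y_2$ primitive, of a different shape --- equivalently, the rightmost shorter square $sq_2^2$ at position $2$ can only be the one-letter shift of $sq_1^2$. Your synchronization/defect discussion gestures at this but never carries it out; closing it requires comparing the two candidate shorter squares directly (e.g.\ Fine--Wilf on their overlap, or a three-squares-type argument), a step that neither your sketch nor, admittedly, the paper's terse invocation of Lemma~\ref{lem:2FS structure} spells out. As it stands, your proposal reproduces the paper's route but does not actually close the one gap it sets out to close.
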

\begin{proof}
	The longer root $SQ_1$ begins with a shorter root $sq_1=(x_1x_2)^{p_1}(x_1)$ as shown below.
	\begin{align}
	SQ_1 &= (x_1x_2)^{p_1}x_1(x_1x_2)^{p_2}=(ax_1'x_2)^{p_1}ax_1'(ax_1'x_2)^{p_2} \label{Eq:sq1}
	\end{align}
	Given $|SQ_1|=|SQ_{2}|$, the roots $SQ_1$ and $SQ_{2}$ are conjugates (see Lemma \ref{lem:EqualLenConseSq}). Thus, $SQ_{2} = (x_1'x_2a)^{p_1}x_1'a(x_1'x_2a)^{p_2}$.
	We use Lemma \ref{lem:2FS structure} to get
	$sq_{2}=(x_1'x_2a)^{p_1}x_1'a$. From Equation (\ref{Eq:sq1}), the root $sq_1$ is $(ax_1'x_2)^{p_1}ax_1'$. Hence $|sq_1|=|sq_{2}|$.
\end{proof}
\begin{lemma}\label{lem:sq0=sq1,SQ0<SQ1}
	Let $w$ be a 2FS square in which $|sq_1|=|sq_{2}|$. Then, $|SQ_1|= |SQ_{2}|$.
\end{lemma}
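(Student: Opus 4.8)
The plan is to mirror the proof of Lemma~\ref{lem:SQ0=SQ1}, running the same chain of implications in the opposite direction. First I would apply Lemma~\ref{lem:EqualLenConseSq} to the two shorter squares: $sq_1^2$ and $sq_2^2$ are squares of equal length occurring at the consecutive locations $1$ and $2$ (with $sq_1^2$ ending first), so $sq_1$ and $sq_2$ are conjugates and $sq_1 a = a\,sq_2$, where $a$ is their common first letter. Since $x_1 = a x_1'$, rotating the form $sq_1 = (ax_1'x_2)^{p_1}(ax_1')$ of Lemma~\ref{lem:2FS structure} by one position yields
$$sq_2 = (x_1'x_2a)^{p_1}(x_1'a),$$
and $x_1'x_2a$, being a cyclic conjugate of the primitive word $x_1x_2$, is itself primitive of length $|x_1x_2|$.

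Next I would show that $x_2$ also begins with the letter $a$. Since $s_2=2$, the square $sq_2^2$ (of length $2|sq_1|$) occurs at location $2$. Because $SQ_1^2$ occurs at location $1$ and $|sq_1|<|SQ_1|<2|sq_1|$, the position $2|sq_1|+1$ lies inside the second copy of $SQ_1$ in $SQ_1^2$, so by the period $|SQ_1|$ of $SQ_1^2$ the letter there equals the letter at position $2|sq_1|+1-|SQ_1|$; an index computation places this position inside the prefix $(x_1x_2)^{p_1}$ of $SQ_1$ at an offset congruent to $|x_1|$ modulo $|x_1x_2|$, so the letter is the first letter of $x_2$. For the two copies of $sq_2$ to align at location $2$ this letter must be the last letter of $sq_2$, namely $a$; hence $x_2 = a x_2'$ for some (possibly empty) $x_2'$, and consequently $x_1'x_2a = (x_1'a)(x_2'a)$ with $x_1'a$ a nonempty proper prefix of the primitive word $x_1'x_2a$.

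With this factorisation, $sq_2 = \big((x_1'a)(x_2'a)\big)^{p_1}(x_1'a)$ is exactly of the shape demanded by Lemma~\ref{lem:2FS structure} for the shorter root of the FS-double square at location $2$, with $q_1 = p_1$ and base $x_1'x_2a$ of length $|x_1x_2|$; hence its longer root is $SQ_2 = sq_2\,(x_1'x_2a)^{q_2}$ for some $q_2$ with $p_1 = q_1 \ge q_2 \ge 1$, and the lemma reduces to proving $q_2 = p_2$, after which $|SQ_2| = |sq_2| + q_2|x_1x_2| = |sq_1| + p_2|x_1x_2| = |SQ_1|$. Rotating $SQ_1^2$ shows that the word of length $|SQ_1|$ beginning at location $2$ is $sq_2\,(x_1'x_2a)^{p_2}$, so if $|SQ_2|\le|SQ_1|$ then $SQ_2$ is a prefix of it and $q_2\le p_2$; and if $q_2<p_2$ (so $|SQ_2|<|SQ_1|$) then $SQ_2^2$, a short prefix of the period-$|SQ_1|$ word $SQ_1^2$, would occur again at a later location, contradicting that location $2$ is the last occurrence of $SQ_2^2$. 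The symmetric argument bounds $|SQ_2|\ge|SQ_1|$, giving $q_2=p_2$.

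\textbf{Main obstacle.} The delicate point is this last step: because $|SQ_1|$ and $|SQ_2|$ both lie strictly between $|sq_1|$ and $2|sq_1|$, producing the repeated occurrence of $SQ_2^2$ (or of $SQ_1^2$) does not follow from the Fine--Wilf periodicity lemma alone. Closing it requires the full last-occurrence property of \emph{both} $SQ_1^2$ and $SQ_2^2$, together with primitivity of the roots from Lemma~\ref{lem:primitiveBases}, and a small amount of case analysis for boundary situations (for instance $x_1'$ empty, or $q_2$ only slightly smaller than $p_2$); this is where the real weight of the argument lies, everything before it being essentially a transcription of the reasoning already used for Lemma~\ref{lem:SQ0=SQ1}.
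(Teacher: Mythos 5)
Your preliminaries are sound and coincide with the paper's: Lemma \ref{lem:EqualLenConseSq} gives $sq_2=(x_1'x_2a)^{p_1}(x_1'a)$, and the whole content of the lemma is to exclude $|SQ_1|<|SQ_2|$ and $|SQ_1|>|SQ_2|$. But the step you yourself label the ``main obstacle'' is a genuine gap, and it sits exactly where the paper's proof does its real work. Two problems. First, your reduction to ``$q_2=p_2$'' silently assumes that the Lemma \ref{lem:2FS structure} factorization of the FS-double square at location $2$ synchronizes with the rotated period, i.e.\ $y_1y_2=x_1'x_2a$ and $q_1=p_1$. Lemma \ref{lem:2FS structure} only asserts that \emph{some} factorization $(y_1y_2)^{q_1}(y_1)(y_1y_2)^{q_2}$ exists, and a word of the shape $(x_1'x_2a)^{p_1}(x_1'a)$ can admit other such factorizations (e.g.\ when $p_1=1$ or $|x_1|$ is small), so this identification needs an argument; the paper avoids the issue in the hard direction by keeping $y_1,y_2,q_1,q_2$ arbitrary and aligning $SQ_2^2$ against $SQ_1^2$. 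Second, and decisively, the exclusion of $q_2<p_2$ by ``$SQ_2^2$ would occur again at a later location'' does not follow from the period-$|SQ_1|$ structure of $SQ_1^2$: since $|SQ_2|>|sq_2|=|sq_1|>\tfrac12|SQ_1|$, an occurrence of $SQ_2^2$ shifted by $|SQ_1|$ would end at position $1+2|SQ_2|+|SQ_1|>2|SQ_1|$, i.e.\ it protrudes past $SQ_1^2$ into a part of $w$ about which nothing is known (the word may simply stop there). The same objection hits the ``symmetric argument'' for $|SQ_2|>|SQ_1|$. So both inequalities are excluded only conditionally on a repetition claim you have not established, and you acknowledge as much.

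For comparison, the paper closes both directions not by exhibiting later occurrences but by deriving word equations that contradict primitivity (Lemma \ref{lem:primitiveBases}). For $|SQ_1|<|SQ_2|$ it compares $SQ_1^2$ and $SQ_2^2$, which agree on all positions up to $2|SQ_1|$, and reads off $ax_1'x_2=x_2ax_1'$, i.e.\ $x_1x_2=x_2x_1$. For $|SQ_1|>|SQ_2|$ it writes $(x_1'x_2a)^{p_2}=(y_1y_2)^{q_2}k$ with $k$ nonempty and runs a short case analysis on $k$ ($k=(y_1y_2)^{q_3}$, $k=y_1$, $k=(y_1y_2)^{q_3}y_3$ with $y_1y_2=y_3y_4$), each case forcing $y_1y_2=y_2y_1$. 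If you want to rescue your route, this is the kind of argument that must replace the missing repetition claim; note that under your assumed factorization the case $q_2<p_2$ already yields such a contradiction directly: $SQ_2^2$ is then a prefix of $w[2..2|SQ_1|]=(x_1'x_2a)^{p_1}(x_1'a)(x_1'x_2a)^{p_2}(x_1'x_2a)^{p_1}(x_1'a)(x_1'x_2a)^{p_2-1}x_1'x_2$, and comparing the two words just past the common prefix $(x_1'x_2a)^{p_1}(x_1'a)(x_1'x_2a)^{q_2}(x_1'x_2a)^{p_1}$ gives $ax_1'x_2=x_2ax_1'$, hence $x_1x_2=x_2x_1$, contradicting primitivity. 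As submitted, the proposal transcribes the easy preliminaries but leaves the decisive case unproved.
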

\begin{proof}We prove the claim by contradiction by showing that both $|SQ_1| < |SQ_2|$ and $|SQ_1| > |SQ_2|$ are not possible.
	
	Suppose $|SQ_1|<|SQ_{2}|$. Assume $SQ_1=(x_1x_2)^{p_1}(x_1)(x_1x_2)^{p_2}$ begins with a letter $a$ such that $x_1x_2=ax_1'x_2$ and $sq_1=(ax_1'x_2)^{p_1}(ax_1')$. We use Lemma \ref{lem:EqualLenConseSq} to get $sq_2 = (x_1'x_2a)^{p_1}(x_1'a)$. As $|SQ_1|<|SQ_2|$, it must be the case that
	\begin{align}
	SQ_2 =& (x_1'x_2a)^{p_1}(x_1'a)(x_1'x_2a)^{p_2+p_3} \text{ where } p_3 > 0
	\end{align}
	Using the above equation, we can express the squares $SQ_1^2$ and $SQ_2^2$ as follows,
	\begin{align}
	SQ_1^2 =& a(x_1'x_2a)^{p_1}(x_1'a)(x_1'x_2a)^{p_2}\underline{(x_1'x_2a)^{p_1}(x_1'a)(x_1'x_2a)^{p_2-1}x_1'x_2}\label{eq:sqCompare1}\\
	SQ_2^2 =&~~(x_1'x_2a)^{p_1}(x_1'a)(x_1'x_2a)^{p_2}\underline{(x_1'x_2a)^{p_3}(x_1'x_2a)^{p_1}(x_1'a)(x_1'}x_2a)^{p_2+p_3}\label{eq:sqCompare2}
	\end{align}
	Note that the squares, $SQ_1$ and $SQ_2$, appear at consecutive locations. It can be observed by comparing the underlined factors in Equations (\ref{eq:sqCompare1}) and (\ref{eq:sqCompare2}) that the word $x_2$ begins with the letter $a$, and
	\begin{align*}
	(x_1'a)(x_1'x_2a)&=x_1'x_2ax_1'a\\
	ax_1'x_2&=x_2ax_1' \implies x_1x_2=x_2x_1
	\end{align*}
	The relation $x_1x_2=x_2x_1$ is a contradiction since $x1x2$ is primitive.
	
	Let us now consider the condition $|sq_1|=|sq_{2}|$ and $|SQ_1|>|SQ_{2}|$. The lengths of suffixes of $SQ_1$ and $SQ_2$ in this 2FS square are $p_2|x_1x_2|$ and $q_2|y_1y_2|$, respectively. Note that $p_2|x_1x_2| < q_2|y_1y_2|$. We can rewrite the FS-double squares using the given conditions as follows.
	\begin{align}\label{caseC squares}
	SQ_1^2 =&a(x_1'x_2a)^{p_1}(x_1'a)&&\mathbf{(x_1'x_2a)^{p_2}}(x_1'x_2a)^{p_1}(x_1'a)(x_1'x_2a)^{p_2-1}(x_1'a)\\
	SQ_2^2 =&~(y_1y_2)^{q_1}\;\;\;\;(y_1)&&\mathbf{(y_1y_2)^{q_2}}(y_1y_2)^{q_1}(y_1)(y_1y_2)^{q_2}
	\end{align}
	The highlighted factors given in Equations (\ref{caseC squares}) lead to, $$(x_1'x_2a)^{p_2}=(y_1y_2)^{q_2}.k \textnormal{\hspace*{15pt} where $k$ is some nonempty word}$$
	Substituting this in Equation (\ref{caseC squares}) gives,
	\begin{align}
	SQ_1^2 =&a(x_1'x_2a)^{p_1}(x_1'a)&&(x_1'x_2a)^{p_2}(x_1'x_2a)^{p_1}(x_1'a)(x_1'x_2a)^{p_2-1}(x_1'a)\nonumber\\
	SQ_2^2 =&~(y_1y_2)^{q_1}\;\;\;\;(y_1)&&\mathbf{(y_1y_2)^{q_2}k.(y_1y_2)^{q_1}}(y_1)(y_1y_2)^{q_2}k\label{eq:9}\\
	SQ_2^2 =&~(y_1y_2)^{q_1}\;\;\;\;(y_1)&&\mathbf{(y_1y_2)^{q_2}(y_1y_2)^{q_1}(y_1)}(y_1y_2)^{q_2}\label{eq:10}
	\end{align}
	In following cases, the highlighted part of $SQ_2^2$ given in Eq. \ref{eq:9} and \ref{eq:10} is discussed for all possible $k$. Each of these cases leads to a contradiction that $y_1y_2$ is non-primitive.
	\begin{description}
		\item [Case 1] $k=(y_1y_2)^{q_3}$: Here $0<q_3<(q_1-q_2)$ as $0<q_2\leq q_1$.
		\begin{align*}
		(y_1y_2)^{q_2}k(y_1y_2)^{q_1}(y_1) &=(y_1y_2)^{q_2}(y_1y_2)^{q_1}(y_1)(y_1y_2)^{q_2}\\
		(y_1y_2)^{q_3}(y_1y_2)^{q_1}(y_1) &=(y_1y_2)^{q_1}(y_1)(y_1y_2)^{q_2}\implies
		y_1y_2=y_2y_1
		\end{align*}
		\item [Case 2] $k=y_1$: We get $y_1.(y_1y_2)^{q_1}=(y_1y_2)^{q_1}.y_1 \implies y_1y_2=y_2y_1$
		\item [Case 3] $k=(y_1y_2)^{q_3}y_3$ where $y_1y_2=y_3y_4$
		\begin{align*}
		k(y_1y_2)^{q_1}&=(y_1y_2)^{q_1}(y_1)(y_1y_2)^{q_2}\\\
		{(y_1y_2)^{q_3}}y_3(y_1y_2)^{q_1}&={(y_1y_2)^{q_3}}(y_1y_2)^{q_1-q_3}(y_1)(y_1y_2)^{q_2}\implies y_3y_4 = y_4y_3
		\end{align*}
	\end{description}
	Thus, $|SQ_1|$ cannot be more than $|SQ_2|$ if the respective shorter squares of FS-double squares are of the same lengths.
\end{proof}

\begin{theorem} [Equal 2FS Squares]\label{thrm:equal2FS}
	In a 2FS square, $|SQ_1|=|SQ_2|$ if and only if $|sq_1|=|sq_2|$.
\end{theorem}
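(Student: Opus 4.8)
The plan is to recognize that this biconditional is exactly the conjunction of the two lemmas just established, so the proof reduces to assembling them and checking that their hypotheses match the hypothesis of the theorem (namely that $w$ is a 2FS square, which is precisely what is assumed here). No new combinatorial argument is needed.

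For the forward implication I would assume $|SQ_1| = |SQ_2|$ and invoke Lemma~\ref{lem:SQ0=SQ1} to conclude $|sq_1| = |sq_2|$. I would briefly recall the mechanism rather than repeat it: equal-length longer roots occurring at consecutive locations are conjugates by Lemma~\ref{lem:EqualLenConseSq}, so $SQ_2$ is the cyclic shift $(x_1'x_2a)^{p_1}x_1'a(x_1'x_2a)^{p_2}$ of $SQ_1 = (ax_1'x_2)^{p_1}ax_1'(ax_1'x_2)^{p_2}$, and then the structural Lemma~\ref{lem:2FS structure} forces $sq_2 = (x_1'x_2a)^{p_1}x_1'a$, which has the same length as $sq_1 = (ax_1'x_2)^{p_1}ax_1'$. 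For the reverse implication I would assume $|sq_1| = |sq_2|$ and invoke Lemma~\ref{lem:sq0=sq1,SQ0<SQ1}, recalling that that lemma kills both strict inequalities by contradiction: $|SQ_1| < |SQ_2|$ forces $x_1x_2 = x_2x_1$, and $|SQ_1| > |SQ_2|$ forces $y_1y_2 = y_2y_1$ via the case analysis on the overhang $k$ with $(x_1'x_2a)^{p_2} = (y_1y_2)^{q_2}k$; both contradict primitivity of $x_1x_2$ respectively $y_1y_2$, so equality must hold.

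Putting the two directions together yields the stated equivalence. I do not expect any obstacle at this point, since all the delicate work — the conjugacy argument in one direction and the primitivity-based case analysis in the other — has already been discharged in Lemmas~\ref{lem:SQ0=SQ1} and~\ref{lem:sq0=sq1,SQ0<SQ1}; the proof of the theorem is essentially a one-line citation of those two results.
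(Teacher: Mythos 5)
Your proposal matches the paper's proof exactly: the theorem is proved there by simply citing Lemmas~\ref{lem:SQ0=SQ1} and~\ref{lem:sq0=sq1,SQ0<SQ1}, one for each direction of the equivalence. Your recap of the internal mechanisms of those lemmas is accurate but not needed beyond the citation.
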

\begin{proof} Follows from Lemmas \ref{lem:SQ0=SQ1} and \ref{lem:sq0=sq1,SQ0<SQ1}.
\end{proof}
\begin{corollary}\label{cor:lenOfEqual2FS}
	Let $w$ be a 2FS square. Then, the words $SQ_1^2$ and $SQ_2^2$ are conjugates if and only if $sq_1^2$ and $sq_2^2$ are conjugates.
\end{corollary}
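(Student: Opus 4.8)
The plan is to reduce both conjugacy claims to statements about lengths, and then invoke Theorem~\ref{thrm:equal2FS} together with Lemma~\ref{lem:EqualLenConseSq}. The key observation is that conjugate words have equal length, while Lemma~\ref{lem:EqualLenConseSq} supplies the converse implication for squares occurring at consecutive locations: equal root lengths force conjugacy. So the whole statement should fall out once we know the relevant squares sit at consecutive locations.

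First I would record that, since $w$ is a 2FS square, $SQ_1^2$ begins at location $1$ and $SQ_2^2$ begins at location $2$, and because each $sq_i^2$ is a prefix of the corresponding $SQ_i^2$, the shorter squares $sq_1^2$ and $sq_2^2$ also begin at locations $1$ and $2$ respectively. Hence both pairs $(SQ_1^2, SQ_2^2)$ and $(sq_1^2, sq_2^2)$ are pairs of squares occurring at consecutive locations, which is exactly the hypothesis needed to apply Lemma~\ref{lem:EqualLenConseSq}.

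For the forward direction, I would assume $SQ_1^2$ and $SQ_2^2$ are conjugates; then $|SQ_1^2|=|SQ_2^2|$, so $|SQ_1|=|SQ_2|$, and Theorem~\ref{thrm:equal2FS} yields $|sq_1|=|sq_2|$, i.e.\ $|sq_1^2|=|sq_2^2|$. Applying Lemma~\ref{lem:EqualLenConseSq} to the consecutive squares $sq_1^2$ and $sq_2^2$ gives that they are conjugates. The backward direction is symmetric: if $sq_1^2$ and $sq_2^2$ are conjugates then $|sq_1|=|sq_2|$, Theorem~\ref{thrm:equal2FS} gives $|SQ_1|=|SQ_2|$, and Lemma~\ref{lem:EqualLenConseSq} applied to $SQ_1^2$ and $SQ_2^2$ finishes the argument.

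I expect no serious obstacle, since the substantive content is already carried by Theorem~\ref{thrm:equal2FS} and Lemma~\ref{lem:EqualLenConseSq}. The only point deserving an explicit sentence is that the shorter squares occur at consecutive locations; this is immediate from their being prefixes of the FS-double squares, but as it is the hypothesis that makes Lemma~\ref{lem:EqualLenConseSq} applicable, I would state it carefully rather than leave it implicit.
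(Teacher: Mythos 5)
Your proof is correct and follows essentially the same route as the paper: reduce conjugacy to equality of lengths, apply Theorem~\ref{thrm:equal2FS}, and convert back via the fact that equal-length squares at consecutive locations are conjugates. You are in fact slightly more explicit than the paper's one-line proof, since you name Lemma~\ref{lem:EqualLenConseSq} and check that $sq_1^2$ and $sq_2^2$ indeed start at consecutive locations, a step the paper leaves implicit.
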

\begin{proof}
	The statement follows from Theorem \ref{thrm:equal2FS} and the fact that conjugates are of equal length.
\end{proof}
Let us see the condition that is required to have an equal 2FS square. Suppose $w$ is an FS-double square with $sq_1^2$ and $SQ_1^2$. It is possible to extend $w$ to obtain a 2FS square. According to Lemmas \ref{lem:EqualLenConseSq} and \ref{lem:sq0=sq1,SQ0<SQ1}, the shorter squares among the two consecutive FS-double squares in an equal 2FS square are conjugates. Therefore, the word $w$ must have a square at the second location, which must be a conjugate of $sq_1^2$.
\begin{align*}
SQ_1^2&=(ax_1'x_2)^{p_1}.(ax_1').(ax_1'x_2)^{p_2}.(ax_1'x_2)^{p_1}.(ax_1').(ax_1'x_2)^{p_2}\\
&=\underline{(ax_1'x_2)^{p_1}.(ax_1').(ax_1'x_2)^{p_1}.(ax_1'}x_2)^{p_2}.(ax_1').(ax_1'x_2)^{p_2}\\
&=(a\underbrace{x_1'x_2)^{p_1}.(ax_1').(ax_1'x_2)^{p_1}.ax_1'(x_2}_{sq_2^2}ax_1')^{p_2}.(ax_1'x_2)^{p_2}
\end{align*}
Thus, the necessary condition for an equal 2FS square is that $lcp(x_1,x_2)$ must be nonempty. In the following lemma, we give the maximum number of consecutive FS-double squares that can be conjugates of each other.
\begin{lemma}\label{lem: max equal 2FS}
	Let a word $w$ that begins with $i$ consecutive FS-double squares such that $|SQ_1|=|SQ_2|=\cdots =|SQ_i|$
	% and $|sq_1|=|sq_2|=\cdots =|sq_i|$
	where $i$ is an positive integer. Then,
	$$
	i \leq \left\{
	\begin{array}{ll}
	|lcp(x_1x_2,x_2x_1)|+1 & \textnormal{ if } p_1>p_2\\
	min(|lcp(x_1x_2,x_2x_1)|+1,|x_1|) & \textnormal{ otherwise}
	\end{array}
	\right.
	$$
\end{lemma}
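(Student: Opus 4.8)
The plan is to track how far the "conjugate chain" of FS-double squares can be pushed forward, location by location, using the structure already established. By Corollary \ref{cor:lenOfEqual2FS} and the preceding discussion, having $|SQ_1| = |SQ_2| = \cdots = |SQ_i|$ is equivalent to having $sq_1^2, sq_2^2, \ldots, sq_i^2$ all conjugate, with each $SQ_j^2$ (and each $sq_j^2$) obtained from its predecessor by the one-letter cyclic shift of Lemma \ref{lem:EqualLenConseSq}: $sq_{j}a_j = a_j sq_{j+1}$ and likewise for the long roots. So the question reduces to: starting from $sq_1 = (x_1x_2)^{p_1}x_1$, how many successive cyclic shifts keep producing a word that is simultaneously the shorter root of an FS-double square whose longer root is the corresponding shift of $SQ_1$? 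I would first record that performing $\ell$ consecutive shifts amounts to rotating $SQ_1^2$ by $\ell$ positions, and that for the rotated word to still begin with a length-$|SQ_1|$ FS-double square whose prefix is a length-$|sq_1|$ square, the letters being "wrapped around" must match — this is exactly where $lcp(x_1x_2, x_2x_1)$ enters.

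The main argument I would run is a comparison of the first $|SQ_1|$ letters of $SQ_1^2$ with the first $|SQ_1|$ letters of its $\ell$-th cyclic rotation, using the periodicity $SQ_1 = (x_1x_2)^{p_1} x_1 (x_1x_2)^{p_2}$. Writing $t = |x_1x_2|$, the word $SQ_1^2$ has period $t$ throughout except that the "defect" block $x_1$ sits in the middle; a cyclic shift by $\ell < t$ re-aligns the leading block only if the length-$\ell$ prefix of $x_1x_2$ agrees with the length-$\ell$ prefix of $x_2x_1$ (because after shifting, the block that must play the role of $x_1x_2$ is $x_2x_1$ read from a shifted offset). Iterating Lemma \ref{lem:EqualLenConseSq} $i-1$ times, the accumulated shift is $i-1$ letters, and consistency at every step forces $i - 1 \le |lcp(x_1x_2, x_2x_1)|$, i.e.\ $i \le |lcp(x_1x_2,x_2x_1)| + 1$. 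If ever the shift reached $t$, we would have $x_1x_2 = x_2x_1$, contradicting primitivity of $x_1x_2$ (so in particular $|lcp(x_1x_2,x_2x_1)| < t$ and the bound is genuine). This handles the first branch, $p_1 > p_2$.

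For the second branch I need the extra constraint $i \le |x_1|$ when $p_1 = p_2$ (equivalently, when $p_2$ is "maximal", $p_1 \not> p_2$). The idea is that when $p_1 = p_2$, the longer square $SQ_1^2$ ends with a full final block $(x_1x_2)^{p_2}$ with no trailing copy of $x_1$, so the "room" available at the right end to absorb cyclic shifts while still keeping $SQ_{i}$ a legitimate longer root — in particular keeping $sq_i$ a prefix square of $SQ_i$ of the form dictated by Lemma \ref{lem:2FS structure} with the same exponents — is limited by $|x_1|$ rather than by $t$. Concretely, after $\ell$ shifts the candidate short root is the $\ell$-rotation of $(x_1x_2)^{p_1}x_1$; for this to still be expressible as $(\tilde x_1\tilde x_2)^{p_1}\tilde x_1$ with $\tilde x_1\tilde x_2$ a rotation of $x_1x_2$, the rotation offset must not exceed $|x_1|$, since beyond that the single "extra" $x_1$ at the end of $sq_1$ is exhausted and the block count would drop. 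So I would show $\ell \le |x_1|$ is forced, giving $i \le \min(|lcp(x_1x_2,x_2x_1)|+1, |x_1|)$.

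The step I expect to be the real obstacle is the second branch: cleanly proving that when $p_1 = p_2$ the shift cannot exceed $|x_1|$ without either breaking the exponent structure required by Lemma \ref{lem:2FS structure} or creating a square occurrence further right (which would contradict $SQ_i^2$ being a last occurrence, i.e.\ an FS-double square rather than just a balanced double square). I would likely argue by contradiction: assume $i > |x_1|$ shifts are possible, write out $SQ_{|x_1|+1}$ explicitly as a rotation, and show its canonical factorization per Lemma \ref{lem:2FS structure} forces a nontrivial period on $x_1x_2$, contradicting primitivity via Lemma \ref{lem:primitiveBases}. The first branch, by contrast, should follow fairly mechanically from the $lcp$ comparison together with the Fine–Wilf-style periodicity bookkeeping already implicit in the earlier lemmas.
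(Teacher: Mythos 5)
Your first branch matches the paper's argument in substance: the paper also reduces to counting consecutive conjugates of $sq_1^2$ (via Lemma \ref{lem:appendingPrefix} / Lemma \ref{lem:EqualLenConseSq}) and observes that inside $SQ_1^2$ the factor following the prefix $sq_1^2$ is $(x_2x_1)^{p_2}(x_1x_2)^{p_2}$, so the forced letters agree with the prefix of $sq_1$ for exactly $|lcp(x_1x_2,x_2x_1)|$ positions, giving $i\leq |lcp(x_1x_2,x_2x_1)|+1$; your "accumulated shift" bookkeeping is the same comparison.

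The gap is in the second branch, and it is the step you yourself flagged. Your primary plan --- write $SQ_{|x_1|+1}$ as a rotation and show its canonical factorization per Lemma \ref{lem:2FS structure} forces a nontrivial period on $x_1x_2$ --- cannot succeed: a rotation of a primitive word is primitive, and the rotated word is a perfectly legitimate (balanced) double square, so no contradiction arises from the structure of the new location at all. The true obstruction, and the paper's argument, lives at location $1$, not at location $|x_1|+1$: when $p_1=p_2=p$, appending the length-$|x_1|$ prefix to $SQ_1^2$ produces a word ending in $(x_1x_2)^{p}(x_1)(x_1x_2)^{p}(x_1)=sq_1^2$, i.e.\ $sq_1^2$ reoccurs at location $|SQ_1|+1$; hence $s_1\leq 1$ and the first location no longer begins with an FS-double square, contradicting the hypothesis. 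Therefore at most $|x_1|-1$ prefix letters can be appended, giving $i\leq |x_1|$; and this reoccurrence does not happen when $p_1>p_2$, which is the actual reason the $|x_1|$ cap is confined to the $p_1=p_2$ case (not the "extra $x_1$ gets exhausted / block count drops" mechanism you describe, which by itself proves nothing since the rotated root need not keep the same exponents). Your secondary suggestion ("creating a square occurrence further right") gestures at the right mechanism, but you attribute the lost last-occurrence to $SQ_i^2$ rather than to the shorter square $sq_1^2$ of the first location, and you leave it as an unresolved alternative rather than carrying it out; as written, the second branch is not proved.
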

\begin{proof}
	As the consecutive FS-double squares are of equal lengths, they are conjugates (refer Theorem \ref{thrm:equal2FS}). Similarly, the shorter squares in these FS-double squares are also conjugates. As per Lemma \ref{lem:appendingPrefix}, conjugates of $sq_1^2$ at consecutive locations are possible if $sq_1^2$ is extended with it's prefix.
	\begin{align*}
	SQ_1^2=&(x_1x_2)^{p_1}(x_1)(x_1x_2)^{p_2}.(x_1x_2)^{p_1}(x_1)(x_1x_2)^{p_2}\\
	=&\underbrace{(\underline{x_1x_2})^{p_1}(x_1)(x_1x_2)^{p_1}x_1}_{sq_1^2}.(\underline{x_2x_1})^{p_2}(x_1x_2)^{p_2}
	\end{align*}
	The word after $sq_1^2$ in $SQ_1^2$ that matches with the prefix of $sq_1^2$ decides the number of conjugates. Since $x_1x_2\neq x_2x_1$, the number of conjugates, in this case, is $|lcp(x_1x_2,x_2x_1)|$. We can get at most $|u|-1$ conjugates of a square $uu$ by appending it's prefix such that these conjugates start at consecutive locations. For $sq_1^2$, there can be $|sq_1|-1$ conjugates and $|lcp(x_1x_2,x_2x_1)|<|sq_1|-1$. So, the total number of FS-double squares are $|lcp(x_1x_2,x_2x_1)|+1$. We know that $p_1\geq p_2$ (see Lemma \ref{lem:2FS structure}) and this value of $i$ holds for $p_1>p_2$. 
	
	It is also necessary to take care of earlier FS-double squares while extending the larger square $SQ_1^2$. For $p_1=p_2=p$, the square $sq_1^2$ repeats if $SQ_1^2$ extended with its prefix of length $|x_1|$ or more.
	\begin{align*}
	SQ_1^2=&(x_1x_2)^{p_1}(x_1)(x_1x_2)^{p_2}.(x_1x_2)^{p_1}(x_1)(x_1x_2)^{p_2}\\
	SQ_1^2.(x_1)=&(x_1x_2)^{p}(x_1)(x_1x_2)^{p}.\boxed{(x_1x_2)^{p}(x_1)(x_1x_2)^{p}(x_1)}\implies{sq_1^2}
	\end{align*}
	Thus, the first location of $SQ_1^2.x_1$ starts with only one rightmost square. So, the maximum value of $i$ is $min(|lcp(x_1x_2,x_2x_1)|+1,|x_1|)$.
\end{proof}
\subsection{Unequal 2FS Squares}\label{subsec:Unqual2FS}
In the previous section, we proved the structure of a 2FS squares where the consecutive FS-double squares satisfy $|SQ_1|=|SQ_2|\iff |sq_1|=|sq_2|$. Referring in terms of the mates defined in \cite{deza15}, an FS-double square $SQ_2^2$ in a 2FS square can be an $\alpha$ mate of the first FS-double square. According to the definition of mates, the second FS-double square in a 2FS square
can be a $\beta,\gamma$ or a $\delta$ mate if the lengths of squares satisfy the respective definitions (see definitions in \cite{deza15}). In this section, we show that $SQ_2^2$ cannot be either a $\beta$ or a $\gamma$ mate of $SQ_1^2$.
\begin{lemma}\label{lem:s0>s1,S0>S1 and s0>s1,S0<S1}
	Given a 2FS square such that $|SQ_1|\neq |SQ_2|$. Then, $|sq_1|<|sq_2|$.
\end{lemma}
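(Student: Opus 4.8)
The statement to prove is Lemma~\ref{lem:s0>s1,S0>S1 and s0>s1,S0<S1}: in a 2FS square with $|SQ_1|\neq|SQ_2|$, we must have $|sq_1|<|sq_2|$. The plan is a contrapositive/contradiction argument that plays off Theorem~\ref{thrm:equal2FS}. By that theorem, $|SQ_1|=|SQ_2|\iff|sq_1|=|sq_2|$; since we are assuming $|SQ_1|\neq|SQ_2|$, we already get $|sq_1|\neq|sq_2|$ for free. So the only thing left to rule out is the case $|sq_1|>|sq_2|$. I would therefore open the proof with ``Suppose for contradiction that $|sq_1|>|sq_2|$'' and derive a periodicity contradiction of the same flavour as in Lemma~\ref{lem:sq0=sq1,SQ0<SQ1}, namely forcing $x_1x_2=x_2x_1$ (or the analogous $y_1y_2=y_2y_1$), contradicting primitivity of the root guaranteed by Lemma~\ref{lem:2FS structure}.

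The key steps, in order, would be: (i) Write out both FS-double squares in canonical form using Lemma~\ref{lem:2FS structure}: $SQ_1=(x_1x_2)^{p_1}x_1(x_1x_2)^{p_2}$ with $sq_1=(x_1x_2)^{p_1}x_1$, and $SQ_2=(y_1y_2)^{q_1}y_1(y_1y_2)^{q_2}$ with $sq_2=(y_1y_2)^{q_1}y_1$, and fix the convention $x_1=ax_1'$ so that $SQ_1$ and hence $w$ begins with $a$. (ii) Use the fact that $sq_1^2$ and $sq_2^2$ occur at consecutive locations $1$ and $2$: since $|sq_1|>|sq_2|$, the square $sq_2^2$ is a factor of $sq_1^2$ starting one position in, which pins down a prefix/suffix relation between $sq_1$ and $sq_2$ in the spirit of Lemma~\ref{lem:EqualLenConseSq} — concretely $a\cdot sq_2\cdot(\text{tail})$ must align with $sq_1\cdot sq_1$. (iii) Do the same bookkeeping for the long squares $SQ_1^2$ and $SQ_2^2$ at locations $1$ and $2$; here $|SQ_1|$ vs. $|SQ_2|$ is not yet known, so I would split into the two subcases $|SQ_1|>|SQ_2|$ and $|SQ_1|<|SQ_2|$ and, in each, write $SQ_1^2$ and $SQ_2^2$ with the long root's extra copies of the root made explicit (exactly as in Equations~(\ref{eq:sqCompare1})–(\ref{eq:sqCompare2}) and~(\ref{eq:9})–(\ref{eq:10}) of the previous lemma). (iv) Compare the overlapping occurrences letter-by-letter: because $sq_1^2$ is the \emph{last} occurrence of $sq_1^2$ and likewise for the others (this is what ``FS-double'' buys us), the overlap of the two shifted copies of the periodic word $(x_1x_2)^{*}$ with the periodic word $(y_1y_2)^{*}$, over a length at least $|x_1x_2|+|y_1y_2|$, forces by the Fine–Wilf theorem (or the elementary argument used repeatedly above) that $x_1x_2$ and $y_1y_2$ are powers of a common word; combined with $|sq_1|>|sq_2|$ and the length arithmetic of the roots, this collapses to $x_1x_2=x_2x_1$, contradicting Lemma~\ref{lem:primitiveBases}.

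The main obstacle I anticipate is step (iv): in the subcase $|SQ_1|<|SQ_2|$ the word $w$ is long enough that the alignment of $SQ_2^2$ against $SQ_1^2$ does not immediately touch the mismatched part of $x_1$ vs. $x_2$, so I expect to need an auxiliary case analysis on the form of the ``gap'' word $k$ defined by $(x_1'x_2a)^{p_2}=(y_1y_2)^{q_2}k$ (or its analogue), mirroring Cases~1–3 in the proof of Lemma~\ref{lem:sq0=sq1,SQ0<SQ1}. Each sub-possibility for $k$ ($k$ a full power of $y_1y_2$; $k=y_1$; $k=(y_1y_2)^{q_3}y_3$ with $y_1y_2=y_3y_4$) should yield either $y_1y_2=y_2y_1$ or $y_3y_4=y_4y_3$, i.e. a non-primitivity contradiction. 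The other subcase $|SQ_1|>|SQ_2|$ is the easier one and should follow the template of Equations~(\ref{eq:sqCompare1})–(\ref{eq:sqCompare2}) almost verbatim, since there the long root $SQ_1$ already dominates. Once both subcases are closed, $|sq_1|>|sq_2|$ is impossible, and with $|sq_1|=|sq_2|$ excluded by Theorem~\ref{thrm:equal2FS}, we conclude $|sq_1|<|sq_2|$.
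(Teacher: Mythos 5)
Your plan follows essentially the same route as the paper: rule out $|sq_1|=|sq_2|$ via Theorem~\ref{thrm:equal2FS}, assume $|sq_1|>|sq_2|$ for contradiction, split into the subcases $|SQ_1|>|SQ_2|$ and $|SQ_1|<|SQ_2|$, align the squares starting at locations $1$ and $2$, and contradict primitivity of the root guaranteed by Lemmas~\ref{lem:2FS structure} and~\ref{lem:primitiveBases}. The only difference is one of economy: the paper closes both subcases directly by exhibiting the run $(y_1y_2)^{q_1+q_2}$ inside the $x_1x_2$-periodic part of $SQ_1^2$, forcing $y_1y_2$ to be non-primitive, so the auxiliary case analysis on the gap word $k$ that you anticipate for the subcase $|SQ_1|<|SQ_2|$ is not needed.
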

\begin{proof}(\textit{By contradiction})
	Assume a 2FS square with $|SQ_1|\neq |SQ_2|$ and it also satisfies $|sq_1|>|sq_2|$. Note that $|sq_1|$ cannot be equal to $|sq_2|$ (refer Theorem \ref{thrm:equal2FS}). Consider the following two cases.
	\begin{description}
		\item [Case1 $|SQ_1|>|SQ_2|$:] The two squares $SQ_1^2$ and $SQ_2^2$ are,
		\begin{align}\label{eq:shorter sq1}
		SQ_1^2=&a(x_1'x_2a)^{p_1}(x_1'a)(x_1'x_2a)^{p_2}(x_1'x_2a)^{p_1}(t'a)(x_1'x_2a)^{p_2-1}(x_1'x_2)\\
		SQ_2^2=&~(y_1y_2)^{q_1}(y_1)(y_1y_2)^{q_2}.(y_1y_2)^{q_1}(y_1)(y_1y_2)^{q_2}
		\end{align}
		Figure \ref{fig:cmpSQ} shows the overlap between these longer squares after removing the first letter $a$ from $SQ_1$.
	\end{description}
	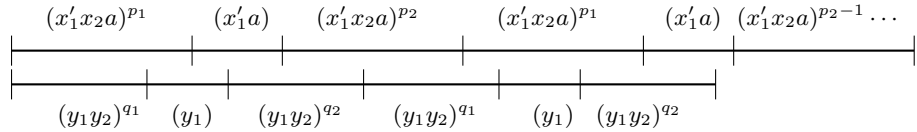
\begin{figure}[htb]
		\begin{tikzpicture}[xscale=1.2]
		\draw [thick]  (0,0) -- (10,0);
		\draw (0,-.2) -- (0, .2);
		\draw (2,-.2) -- (2, .2);
		\draw (3,-.2) -- (3, .2);
		\draw (5,-.2) -- (5, .2);
		\draw (7,-.2) -- (7, .2);
		\draw (8,-.2) -- (8, .2);
		\draw (10,-.2) -- (10, .2);
		\node[align=left, above] at (0.95,.15){$(x_1'x_2a)^{p_1}$};
		\node[align=center, above] at (2.55,.15){$(x_1'a)$};
		\node[align=left, above] at (3.95,.15){$(x_1'x_2a)^{p_2}$};
		\node[align=left, above] at (5.95,.15){$(x_1'x_2a)^{p_1}$};
		\node[align=center, above] at (7.55,.15){$(x_1'a)$};
		\node[align=left, above] at (8.95,.15){$(x_1'x_2a)^{p_2-1}\cdots$};
		\end{tikzpicture}
		
		\begin{tikzpicture}[xscale=1.2]
		\draw [thick]  (0,0) -- (7.8,0);
		\draw (0,-.2) -- (0, .2);
		\draw (1.5,-.2) -- (1.5, .2);
		\draw (2.4,-.2) -- (2.4, .2);
		\draw (3.9,-.2) -- (3.9, .2);
		\draw (5.4,-.2) -- (5.4, .2);
		\draw (6.3,-.2) -- (6.3, .2);
		\draw (7.8,-.2) -- (7.8, .2);
		\node[align=right, below] at (0.80,-.15){$~~~~(y_1y_2)^{q_1}$};
		\node[align=center, below] at (2,-.15){$(y_1)$};
		\node[align=left, below] at (3.2,-.15){$(y_1y_2)^{q_2}$};
		\node[align=left, below] at (4.7,-.15){$(y_1y_2)^{q_1}$};
		\node[align=center, below] at (6,-.15){$(y_1)$};
		\node[align=left, below] at (6.95,-.15){$(y_1y_2)^{q_2}$};
		\end{tikzpicture}
		\caption{Aligned bigger squares in a 2FS square with $|sq_1| > |sq_2|$}
		\label{fig:cmpSQ}
	\end{figure}
	From the relation $|sq_2^2|<|sq_1^2|$ and Equation (\ref{eq:shorter sq1}), the word $(y_1y_2)^{q_1}$ is a prefix of $(x_1'x_2a)^{p_1}$. The occurrence of $(y_1y_2)^{q_1}$ in $SQ_1^2$ is shown below:
	\begin{align*}
	SQ_1^2=a(x_1'x_2a)^{p_1}(x_1'\overbrace{a)\underbrace{(x_1'x_2a)^{p_2}(x_1'}_{(y_1y_2)^{q_2}}}^{(y_1y_2)^{q_1+q_2}}a)^{p_1}(x_1'a)(x_1'x_2a)^{p_2-1}(x_1'x_2)
	\end{align*}
	It can be noted that $y_1y_2$ is a non-primitive word. According to Lemma \ref{lem:primitiveBases}, if the second location starts with an FS-double square, then $y_1y_2$ must be a primitive word. This is a contradiction.
	\begin{description}
		\item [Case2 $|SQ_1|<|SQ_2|$:] Similar to the previous case, we write $SQ_1^2$ as follows.
		\begin{align*}
		SQ_1^2=a(x_1'x_2a)^{p_1}(x_1'\overbrace{a)\underbrace{(x_1'x_2a)^{p_2}.(x_1'}_{y_1y_2^{q_2}}x_2a)^{p_1}(x_1'}^{(y_1y_2)^{q_1+q_2}}a)(x_1'x_2a)^{p_2-1}(x_1'x_2)
		\end{align*}Again, it shows that $y_1y_2$ is a non-primitive word which is a contradiction.
	\end{description}
	Thus, we have shown that the longer roots of a 2FS square if satisfy the relation $|SQ_1|\neq|SQ_2|$, then the lengths of the shorter roots follow $|sq_1|<|sq_2|$.
\end{proof}
The previous result rules out the possibility of $|sq_1| > |sq_2|$ for smaller roots in a 2FS square. The following lemma shows that the second FS-double square in any 2FS square is at least as long as the first one.
\begin{lemma}\label{lem:s1<s2implyS1<S2}
	Let $w$ be a 2FS square. Then, the square $SQ_2^2$ ends after $SQ_1^2$.
\end{lemma}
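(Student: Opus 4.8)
The plan is to rule out the remaining case $|SQ_1| > |SQ_2|$, since by Lemma~\ref{lem:s0>s1,S0>S1 and s0>s1,S0<S1} we already know $|sq_1| < |sq_2|$, and by Theorem~\ref{thrm:equal2FS} we know $|SQ_1| = |SQ_2|$ exactly when $|sq_1| = |sq_2|$, so the genuinely open possibility is $|sq_1| < |sq_2|$ together with $|SQ_1| > |SQ_2|$. First I would write out both squares using the canonical factorization of Lemma~\ref{lem:2FS structure}: $SQ_1 = (x_1x_2)^{p_1}x_1(x_1x_2)^{p_2}$ with $x_1 = ax_1'$, and $SQ_2 = (y_1y_2)^{q_1}y_1(y_1y_2)^{q_2}$. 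Removing the leading letter $a$ from $SQ_1$ aligns the two big squares as in Figure~\ref{fig:cmpSQ}, and because $SQ_2^2$ is shorter it occurs as a prefix of $SQ_1^2$ shifted by one position. The idea is then to locate $sq_2^2$ (the shorter square of the second FS-double square, which by hypothesis has $|sq_1| < |sq_2| \le |SQ_2| < |SQ_1|$) inside the structured word $SQ_1^2$ and read off a periodicity relation on $y_1y_2$.

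The key steps, in order: (1) express $SQ_1^2 = a(x_1'x_2a)^{p_1}(x_1'a)(x_1'x_2a)^{p_2}(x_1'x_2a)^{p_1}(x_1'a)(x_1'x_2a)^{p_2-1}x_1'x_2$ so that everything after the first letter is built from the primitive block $x_1'x_2a$ (a conjugate of $x_1x_2$); (2) use $|SQ_2| < |SQ_1|$ to write $SQ_2^2$ as a prefix of this word and $|sq_1| < |sq_2|$ to pin down where the shorter root $sq_2$ ends — it must end strictly inside the stretch governed by the period $|x_1x_2|$, so that $(y_1y_2)^{q_1}y_1$ (or $(y_1y_2)^{q_1+q_2}$, depending on the exact alignment) becomes a factor of a power of $x_1'x_2a$; (3) conclude by the Fine--Wilf / primitivity argument, exactly as in the Case~1 and Case~2 subproofs of Lemma~\ref{lem:s0>s1,S0>S1 and s0>s1,S0<S1}, that $y_1y_2$ shares a period with $x_1x_2$ and hence is non-primitive, contradicting part~(a) of the Two Squares Factorization Lemma (Lemma~\ref{lem:primitiveBases}) applied to the FS-double square at the second location. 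Since this exhausts all sign possibilities for $|SQ_1| - |SQ_2|$ except $<$, we get $|SQ_1| \le |SQ_2|$, i.e.\ $SQ_2^2$ ends no earlier than $SQ_1^2$; combined with Lemma~\ref{lem:EqualLenConseSq} in the equality case, it ends after $SQ_1^2$.

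The main obstacle I expect is bookkeeping the alignment in step~(2): the second FS-double square starts one position to the right of the first, and its shorter square $sq_2^2$ has length strictly between $2|sq_1|$ and $2|SQ_1|$, so several sub-cases arise depending on how $2|sq_2|$ sits relative to the ``landmarks'' $2|sq_1|$, $|SQ_1| + |sq_1|$, and so on — mirroring the three-way case split ($k = (y_1y_2)^{q_3}$, $k = y_1$, $k = (y_1y_2)^{q_3}y_3$) already seen in Lemma~\ref{lem:sq0=sq1,SQ0<SQ1}. In each sub-case the conclusion is the same periodicity contradiction, but one has to be careful that the factor of $y_1y_2$-powers one extracts is long enough (at least $|y_1y_2| + |x_1x_2|$, say) for Fine--Wilf to force the common period; the shift-by-one from the consecutive-locations hypothesis is exactly what guarantees this overlap is long enough. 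A secondary point is that one should invoke primitivity of $x_1'x_2a$ (not just of $x_1x_2$) when reading periods off $SQ_1^2$ after deleting the first letter; this follows since conjugates of primitive words are primitive.
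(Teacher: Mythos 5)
Your proposal follows essentially the same route as the paper's proof: handle the equal-length case via Theorem~\ref{thrm:equal2FS}, use Lemma~\ref{lem:s0>s1,S0>S1 and s0>s1,S0<S1} to reduce to $|sq_1|<|sq_2|$, assume for contradiction that $SQ_1^2$ ends after $SQ_2^2$, write $(x_1'x_2a)^{p_1}(x_1'a)(x_1'x_2a)^{p_2}=(y_1y_2)^{q_1}(y_1)(y_1y_2)^{q_2}k$, and rerun the three-way case analysis on $k$ from Lemma~\ref{lem:sq0=sq1,SQ0<SQ1} to force $y_1y_2=y_2y_1$, contradicting primitivity via Lemma~\ref{lem:primitiveBases}. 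This matches the paper's argument; your Fine--Wilf phrasing is just a slightly different way of expressing the same cancellation step.
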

\begin{proof}
	If $w$ is an equal 2FS square, then $SQ_2^2$ always ends after $SQ_1^2$ (see Theorem \ref{thrm:equal2FS}).
	Now consider if $w$ is not an equal 2FS square. For this case, it is shown in Lemma \ref{lem:s0>s1,S0>S1 and s0>s1,S0<S1} that $|sq_1|<|sq_2|$. Suppose, for contradiction, $SQ_1^2$ ends after $SQ_2^2$. The two FS-double squares in such a word are given as follows:
	\begin{align}
	SQ_1^2=&a\mathbf{(x_1'x_2a)^{p_1}(x_1'a)\;\;(x_1'x_2a)^{p_2}}(x_1'x_2a)^{p_1}(x_1'a)(x_1'x_2a)^{p_2-1}(x_1'a) \label{eq:lem:s11}\\
	SQ_2^2=&~\mathbf{(y_1y_2)^{q_1}\;\;\;\;\;\;\;\;(y_1)(y_1y_2)^{q_2}}(y_1y_2)^{q_1}(y_1)(y_1y_2)^{q_2} \label{eq:lem:s12}
	\end{align}
	From the highlighted factor in Equation (\ref{caseC squares}), we have
	\[(x_1'x_2a)^{p_1}(x_1'a)(x_1'x_2a)^{p_2}=(y_1y_2)^{q_1}(y_1)(y_1y_2)^{q_2}.k  \textnormal{\hspace*{10pt} where } k\in \varSigma^+. \] Substituting this in Equations (\ref{eq:lem:s11}) and (\ref{eq:lem:s12}),
	\begin{align}
	SQ_1^2=&a(x_1'x_2a)^{p_1}(x_1'a)\;\;\;(x_1'x_2a)^{p_2}(x_1'x_2a)^{p_1}(x_1'a)(x_1'x_2a)^{p_2-1}(x_1'a)\\
	SQ_2^2=&~(y_1y_2)^{q_1}\;\;\;\;\;\;\;\;(y_1)\mathbf{(y_1y_2)^{q_2}k.(y_1y_2)^{q_1}}(y_1)(y_1y_2)^{q_2}k\\
	SQ_2^2=&~(y_1y_2)^{q_1}\;\;\;\;\;\;(y_1)\mathbf{(y_1y_2)^{q_2}(y_1y_2)^{q_1}(y_1)}(y_1y_2)^{q_2}
	\end{align}
	The highlighted factors in the above equations lead to the cases that are discussed in Lemma \ref{lem:sq0=sq1,SQ0<SQ1}. Therefore, we have a contradiction as the first location cannot start with an FS-double square.
\end{proof}
If the lengths of shorter squares of two FS squares in a 2FS square are equal, then the respective longer squares are conjugates. The only possible structure of a 2FS square where the shorter squares are of unequal lengths satisfies $|sq_1|<|sq_2|$ and $|SQ_1|<|SQ_2|$. We now find the smallest length of $sq_2^2$ in the following lemma.
\begin{lemma}\label{lem:s0<s1<=s0+S0,S0<S1}
	Let $w$ be a 2FS square with $|sq_2|>|sq_{1}|$. Then, $|sq_{2}|>|SQ_1|$.
\end{lemma}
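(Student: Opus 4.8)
I would argue by contradiction, assuming $|sq_2| \le |SQ_1|$. Together with the hypothesis $|sq_2| > |sq_1|$ this forces $|sq_1| \neq |sq_2|$, so Theorem~\ref{thrm:equal2FS} gives $|SQ_1| \neq |SQ_2|$, and then Lemma~\ref{lem:s0>s1,S0>S1 and s0>s1,S0<S1} together with Lemma~\ref{lem:s1<s2implyS1<S2} places us in the regime
\[
|sq_1| < |sq_2| \le |SQ_1| < |SQ_2|.
\]
Writing $x_1 = a x_1'$ as everywhere in this section, so that $w$ begins with the letter $a$ and with $SQ_1^2$, deleting this leading $a$ shows that positions $2,\dots,2|SQ_1|$ of $w$ spell $\overline{SQ_1}^{\,2}$ with its last letter ($a$) removed, where $\overline{SQ_1} = (x_1'x_2a)^{p_1}(x_1'a)(x_1'x_2a)^{p_2}$ is the left cyclic shift of $SQ_1$. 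By Lemma~\ref{lem:primitiveBases}(a) the word $SQ_1$, hence also its conjugate $\overline{SQ_1}$, is primitive. Since $|SQ_2| > |SQ_1|$ and $SQ_2^2$ also starts at position $2$, the prefix of $SQ_2$ of length $|SQ_1|$ is $\overline{SQ_1}$; as $SQ_2 = sq_2(y_1y_2)^{q_2}$ with $|sq_2| \le |SQ_1| = |\overline{SQ_1}|$, the root $sq_2$ is a prefix of $\overline{SQ_1}$.

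The plan is then to split on $|sq_2| = |SQ_1|$ versus $|sq_2| < |SQ_1|$. In the first (``$\gamma$-mate'') case $sq_2 = \overline{SQ_1}$, so $sq_2^2 = \overline{SQ_1}^{\,2}$ and $SQ_2 = \overline{SQ_1}(y_1y_2)^{q_2}$ with $(y_1y_2)^{q_2}$ a proper prefix of $\overline{SQ_1}=(y_1y_2)^{q_1}y_1$; reading off the letters of $w$ just past $SQ_1^2$ from $SQ_2^2$ and from the period $|SQ_1|$ of $SQ_1^2$, one shows that the non-primitive square $\overline{SQ_1}^{\,2}$ must recur in $w$ at a location beyond $2$ (the situation of Lemma~\ref{lem:appendingPrefix} applied to $\overline{SQ_1}^{\,2}$), contradicting that the last occurrence of $sq_2^2$ starts at location $2$; equivalently, substituting $\overline{SQ_1}=(y_1y_2)^{q_1}y_1$ into the overlap forces $y_1y_2 = y_2 y_1$ or that $SQ_2$ is non-primitive, contradicting primitivity of $y_1y_2$ or Lemma~\ref{lem:primitiveBases}(a).

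In the case $|sq_2| < |SQ_1|$ the square $sq_2^2$ is a prefix of the primitively rooted square $\overline{SQ_1}^{\,2}$ with root strictly shorter than $\overline{SQ_1}$. Writing $\overline{SQ_1} = sq_2\cdot t$ with $|t| \ge 1$, I would analyse how $sq_2^2$ fits inside $sq_2\,t\,sq_2\,t$. If $|t| \ge |sq_2|$ then $sq_2^2$ is a prefix of $\overline{SQ_1}$, whence either $2|sq_2| < |\overline{SQ_1}|$ and $sq_2^2$ recurs one period of $SQ_1^2$ later, at location $|SQ_1|+2$ (contradicting the last-occurrence property of $sq_2^2$), or $2|sq_2| = |\overline{SQ_1}|$ and $\overline{SQ_1} = sq_2^2$ is non-primitive, contradicting Lemma~\ref{lem:primitiveBases}(a). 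If $|t| < |sq_2|$ then $t$ is a prefix of $sq_2$, so $sq_2$ acquires the period $|t|$ on top of its period $|y_1y_2|$; combining these via the Fine--Wilf theorem, together with the long run $(y_1y_2)^{q_1+q_2}$ sitting inside $SQ_2^2$ and overlapping the period-$|SQ_1|$ region $SQ_1^2$ of $w$, forces $|y_1y_2|$ to divide a quantity it cannot (such as $|y_1|$), or $y_1y_2 = y_2y_1$, or $x_1x_2 = x_2x_1$ — in every branch contradicting primitivity of $x_1x_2$ or of $y_1y_2$. Hence $|sq_2| \le |SQ_1|$ is impossible and $|sq_2| > |SQ_1|$.

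I expect the main obstacle to be the bookkeeping around the overlap of $SQ_1^2$ and $SQ_2^2$: they share only $2|SQ_1|-1$ letters, exactly one short of what Fine--Wilf would need to immediately force a common period on the two roots, so the borderline $|sq_2| = |SQ_1|$ cannot simply be absorbed into the generic case, and the single letter of $w$ immediately past $SQ_1^2$ must be pinned down by hand — using either the period $|SQ_2|$ of $SQ_2^2$ (which extends beyond $SQ_1^2$) or the inner square $sq_1^2$. This is the same kind of delicate alignment argument used in Lemmas~\ref{lem:sq0=sq1,SQ0<SQ1} and~\ref{lem:s0>s1,S0>S1 and s0>s1,S0<S1}, and the case split is driven by the length relations among $|sq_2|$, $|SQ_1|$, $|y_1y_2|$ and $|t|$.
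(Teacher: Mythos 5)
Your frame matches the paper's (contradiction, reduction to $|sq_1|<|sq_2|\le|SQ_1|<|SQ_2|$ via Theorem~\ref{thrm:equal2FS} and Lemmas~\ref{lem:s0>s1,S0>S1 and s0>s1,S0<S1} and~\ref{lem:s1<s2implyS1<S2}, then a split at $|sq_2|=|SQ_1|$), and your subcase $2|sq_2|\le|SQ_1|$ is correct and in fact cleaner than the paper: either $sq_2^2$ recurs at location $|SQ_1|+2$ by the period $|SQ_1|$, or $\overline{SQ_1}=sq_2^2$ contradicts Lemma~\ref{lem:primitiveBases}(a). The gap is that the two branches carrying the real difficulty are asserted rather than proved. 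In the branch $|sq_2|=|SQ_1|$, your claim that $sq_2^2=\overline{SQ_1}^{\,2}$ ``must recur at a location beyond $2$'' is unjustified: Lemma~\ref{lem:appendingPrefix} produces conjugates of a square, not later occurrences of the same square, and the letters of $w$ beyond position $2|SQ_1|+1$ are governed by $SQ_2^2=\bigl(\overline{SQ_1}(y_1y_2)^{q_2}\bigr)^2$, whose second copy of $\overline{SQ_1}$ is followed only by the proper prefix $(y_1y_2)^{q_2}$ of $\overline{SQ_1}$, so no recurrence of $sq_2^2$ is forced and the last-occurrence property is not violated. Your fallback ``forces $y_1y_2=y_2y_1$ or $SQ_2$ non-primitive'' is a statement of the desired conclusion, not a derivation. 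The paper's actual argument here is the one-letter computation you defer to your closing paragraph but never carry out: since $sq_2=\overline{SQ_1}$ ends in $a$, the square $sq_2^2$ forces the letter $b$ at position $2|SQ_1|+1$ to equal $a$, while $b\ne a$ is required because appending $a$ to $SQ_1^2$ puts one back in the equal-2FS configuration (Lemma~\ref{lem:SQ0=SQ1}, Theorem~\ref{thrm:equal2FS}), which contradicts $|sq_2|\ne|sq_1|$.

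The branch $|t|<|sq_2|$ (i.e.\ $|SQ_1|<2|sq_2|<2|SQ_1|$) is likewise only a promissory note: as you yourself observe, Fine--Wilf does not apply on the available overlap, and ``forces $|y_1y_2|$ to divide a quantity it cannot, or $y_1y_2=y_2y_1$, or $x_1x_2=x_2x_1$'' lists hoped-for contradictions without exhibiting one. The paper closes this case (its Case 1) by a synchronization argument you would still need to supply: by the period $|SQ_1|$ of $SQ_1^2$, the prefix $(y_1y_2)^{q_1}$ of $sq_2$ recurs at position $|SQ_1|+2$, and since $|sq_2|<|SQ_1|<|SQ_2|$ this occurrence begins inside the $(y_1y_2)$-aligned region $(y_1y_2)^{q_2}(y_1y_2)^{q_1}$ of $SQ_2^2$; tracking the occurrences of $sq_2$ in $SQ_2^2$ (the paper's Figure~\ref{fig:2}) then yields an internal occurrence of $y_1y_2$ inside $(y_1y_2)^2$, so $y_1y_2$ is non-primitive, contradicting Lemma~\ref{lem:2FS structure}. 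Until you execute these two branches at this level of detail, the proposal does not establish the lemma precisely where it is hard.
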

\begin{proof}(\textit{By contradiction}) Suppose the shorter square, $sq_2^2$, in a 2FS square satisfies the relation $|sq_{2}|\leq|SQ_1|$. Then, the two possible cases to consider are as follows.
	\begin{description}
		\item [Case 1 $|sq_1|<|sq_2|<|SQ_1|<|SQ_2|$:] 
		The following equations show the alignment of two FS-double squares.
		\begin{align}
		SQ_1^2=&a(x_1'x_2a)^{p_1}(x_1'a)(x_1'x_2a)^{p_2}.\overbrace{(x_1'x_2a)^{p_1}(x_1'a)(x_1'}^{(y_1y_2)^{q_1}}x_2a)^{p_2-1}x_1'x_2 \label{eq:3.15} \\
		SQ_{2}^2=&~~(y_1y_2)^{q_1}\;\;\;\;\;\;\;\;\;\;(y_1)\;\;\;\mathbf{(y_1y_2)^{q_2}.(y_1y_2)^{q_1}}\;\;\;\;\;\;\;\;\;\;(y_1)\;\;\;(y_1y_2)^{q_2}\label{eq:3.16}
		\end{align}
		From the constraints on the roots, $|sq_1|<|sq_2|$ and $|SQ_1|<|SQ_2|$, the marked factor $(y_1y_2)^{q_1}$ in Equation (\ref{eq:3.15}) always begins somewhere in the suffix $(y_1y_2)^{q_2}$ of the first occurrence of $SQ_2$. Otherwise, the constraint $|SQ_1|<|SQ_2|$ will be violated. Next, Figure \ref{fig:2} illustrates the occurrences of $sq_2$ in $SQ_2^2$.
	\end{description}
	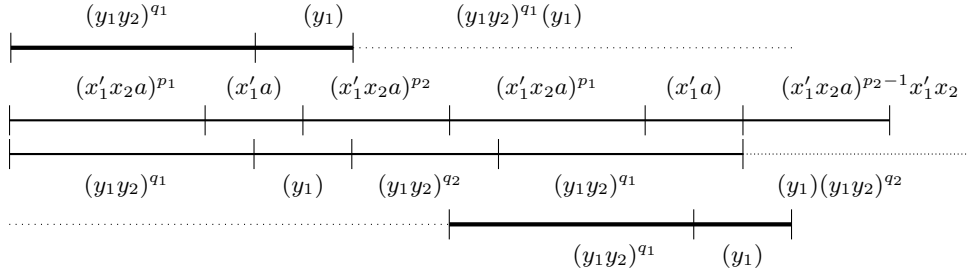
\begin{figure}[!htb]
		\begin{tikzpicture}[xscale=1.3]
		\draw [ultra thick]  (0,0) -- (3.5,0);
		\draw (0,-.2) -- (0, .2);
		\draw (2.5,-.2) -- (2.5, .2);
		\draw (3.5,-.2) -- (3.5, .2);
		\draw [dotted] (3.5,0) -- (8,0);
		\node[align=right, above] at (1.2,.15){$(y_1y_2)^{q_1}$};
		\node[align=left, above] at (3.2,.15){$(y_1)$};
		\node[align=right, above] at (5.2,.15){$(y_1y_2)^{q_1}(y_1)$};
		\end{tikzpicture}
		
		\begin{tikzpicture}[xscale=1.3]
		\draw [thick]  (0,0) -- (9,0);
		\draw (0,-.2) -- (0, .2);
		\draw (2,-.2) -- (2, .2);
		\draw (3,-.2) -- (3, .2);
		\draw (4.5,-.2) -- (4.5, .2);
		\draw (6.5,-.2) -- (6.5, .2);
		\draw (7.5,-.2) -- (7.5, .2);
		\draw (9,-.2) -- (9, .2);
		\node[align=right, above] at (1.2,.15){$(x_1'x_2a)^{p_1}$};
		\node[align=right, above] at (2.5,.15){$(x_1'a)$};
		\node[align=right, above] at (3.8,.15){$(x_1'x_2a)^{p_2}$};
		\node[align=right, above] at (5.5,.15){$(x_1'x_2a)^{p_1}$};
		\node[align=right, above] at (7,.15){$(x_1'a)$};
		\node[align=right, above] at (8.8,.15){$(x_1'x_2a)^{p_2-1}x_1'x_2$};
		\end{tikzpicture}
		
		\begin{tikzpicture}[xscale=1.3]
		\draw [thick]  (0,0) -- (7.5,0);
		\draw (0,-.2) -- (0, .2);
		\draw (2.5,-.2) -- (2.5, .2);
		\draw (3.5,-.2) -- (3.5, .2);
		\draw (5,-.2) -- (5, .2);
		\draw (7.5,-.2) -- (7.5, .2);
		\draw [densely dotted]  (7.5,0) -- (9.8,0);
		\node[align=right, below] at (1.2,-.15){$(y_1y_2)^{q_1}$};
		\node[align=right, below] at (3,-.15){$(y_1)$};
		\node[align=right, below] at (4.2,-.15){$(y_1y_2)^{q_2}$};
		\node[align=right, below] at (6,-.15){$(y_1y_2)^{q_1}$};
		\node[align=right, below] at (8.5,-.15){$(y_1)(y_1y_2)^{q_2}$};
		\end{tikzpicture}
		
		\begin{tikzpicture}[xscale=1.3]
		\draw [dotted]  (0,0) -- (4.5,0);
		\draw [ultra thick]  (4.5,0) -- (8,0);
		\draw (4.5,-.2) -- (4.5, .2);
		\draw (7,-.2) -- (7, .2);
		\draw (8,-.2) -- (8, .2);
		\node[align=right, below] at (6.2,-.15){$(y_1y_2)^{q_1}$};
		\node[align=left, below] at (7.5,-.15){$(y_1)$};
		\end{tikzpicture}
		\caption{Occurrences of $sq_2$ in $SQ_2^2$}
		\label{fig:2}
	\end{figure}
	We can conclude from Figure \ref{fig:2} that $(y_1y_2)$ appears in a square $(y_1y_2)^2$. So, $(y_1y_2)$ is a non-primitive word and the second location cannot start with an FS-double square. Thus, a contradiction.
	\begin{description}
		\item [Case 2 $|sq_1|<|sq_2|=|SQ_1|<|SQ_2|$:] The square $SQ_1^2$ begins with a letter $a$ and it must be appended by a letter $b$ to avoid the case that results in an equal 2FS square. As per the given conditions, the word $a.sq_2^2$ is the prefix of given 2FS square.  We illustrate the occurrences of the roots of squares of given 2FS square in the next equation.
		\begin{align}
		SQ_1^2=a&(x_1'x_2a)^{p_1}(x_1'a)(x_1'x_2a)^{p_2}(x_1'x_2a)^{p_1}(x_1'a)(x_1'x_2a)^{p_2-1}(x_1'x_2)\label{eq:case2.0}\\
		a.SQ_2^2=a&\overbrace{(x_1'x_2a)^{p_1}(x_1'a)(x_1'x_2a)^{p_2}}^{sq_2}\overbrace{(x_1'x_2a)^{p_1}(x_1'a)(x_1'x_2a)^{p_2-1}(x_1'x_2\mathbf{b})}^{sq_2}\cdots\label{eq:case2.1}
		\end{align}
		In Equation \ref{eq:case2.1}, the first occurrence of the root $sq_2$ in $sq_2^2$ ends with a letter $a$. We get $b=a$ from the last occurrence of $sq_2$. Hence, a contradiction.
	\end{description}
\end{proof}

\begin{theorem}[Unequal 2FS Square]\label{thrm:unequal2FS}
	For a 2FS square, $|SQ_1| < |sq_2|$ if and only if $|sq_1| < |sq_2|$.  
\end{theorem}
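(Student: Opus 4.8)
The plan is straightforward: both implications can be read off from results already in hand, with the genuine combinatorial work having been done inside Lemma~\ref{lem:s0<s1<=s0+S0,S0<S1}.

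First I would dispose of the forward implication $|SQ_1|<|sq_2|\Rightarrow|sq_1|<|sq_2|$ using nothing but the standing convention that for an FS-double square the shorter root is strictly shorter than the longer one, i.e.\ $|sq_1|<|SQ_1|$. This at once yields the chain $|sq_1|<|SQ_1|<|sq_2|$, and in particular $|sq_1|<|sq_2|$.

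Next I would handle the reverse implication $|sq_1|<|sq_2|\Rightarrow|SQ_1|<|sq_2|$. Here the hypothesis $|sq_2|>|sq_1|$ is exactly the hypothesis of Lemma~\ref{lem:s0<s1<=s0+S0,S0<S1}, whose conclusion $|sq_2|>|SQ_1|$ is precisely the desired inequality; so there is nothing further to argue.

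Finally I would add a short remark placing the statement in context: combining Theorem~\ref{thrm:equal2FS}, Lemma~\ref{lem:s0>s1,S0>S1 and s0>s1,S0<S1}, Lemma~\ref{lem:s1<s2implyS1<S2} and the present theorem, the four root lengths of a 2FS square obey exactly one of two patterns --- the equal case $|sq_1|=|sq_2|$ and $|SQ_1|=|SQ_2|$, or the strictly increasing chain $|sq_1|<|SQ_1|<|sq_2|<|SQ_2|$. There is no real obstacle internal to this theorem; the only delicate point, namely ruling out $|sq_1|<|sq_2|\leq|SQ_1|$ via the non-primitivity of $y_1y_2$ together with the two squares factorization lemma, was already settled inside Lemma~\ref{lem:s0<s1<=s0+S0,S0<S1}, so the proof here is a one-line assembly of prior results.
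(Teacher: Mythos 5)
Your proof is correct and matches the paper's argument exactly: the forward implication follows from the standing convention $|sq_1|<|SQ_1|$, and the reverse implication is precisely the content of Lemma~\ref{lem:s0<s1<=s0+S0,S0<S1}. The additional contextual remark is accurate but not needed for the proof itself.
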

\begin{proof}
	We know that $|sq_1|<|SQ_1|$ and, therefore, $|SQ_1| < |sq_2|$ implies $|sq_1| < |sq_2|$. The only-if part is proved in Lemma \ref{lem:s0<s1<=s0+S0,S0<S1}. 
\end{proof}
The structure of FS-double squares of an unequal 2FS square shown in Theorem \ref{thrm:unequal2FS} fits into the definition of $\delta$ mates defined in \cite{deza15}:
\begin{definition}[$\delta$ mate \cite{deza15}]\label{def:delta}
	An FS-double square $SQ_k^2=((y_1y_2)^{q_1}(y_1)(y_1y_2)^2)^2$ is $\delta$ mate of an FS-double $SQ_1^2$ if it satisfies the next conditions:
	\begin{enumerate}
		\item $1<k<(p_1-1)|x_1x_2|+|lcp(x_1x_2,x_2x_1)|$,
		\item $|sq_k|>|SQ_1|$, and
		\item $s.x_2x_1(x_1x_2)^{p_1+p_2-1}x_1$ is prefix of $sq_k$ for some non-empty suffix $s$ of $x_1$, or
		\item $s(x_1x_2)^ix_1(x_1x_2)^{p_1+p_2-1}x_1$ is a non-trivial prefix of $sq_k$ for some suffix $s$ of $x_1x_2$ and some $i\geq 1$.
	\end{enumerate}
\end{definition}
Given an unequal 2FS square, the first two criteria are satisfied by an FS-double square starting at location $2$. We can write $SQ_2^2=x_1'x_2(x_1x_2)^{p_1-1}(x_1)(x_1x_2)^{p_1+p_2}\cdots$ where it satisfies the third criterion of Definition \ref{def:delta} if $p_1=1$. Otherwise, $SQ_2^2$ begins with the prefix mentioned in the last criterion of the same definition. 
\begin{theorem}[2FS Square]\label{thrm:2FS}
	A 2FS square is exactly one of the following types:
	\begin{enumerate}[(a)]
		\item \textit{Equal 2FS square} with $|sq_1|=|sq_2|$ and $|SQ_1|=|SQ_2|$, or
		\item \textit{Unequal 2FS square} with $|sq_1|<|SQ_1|<|sq_2|<|SQ_2|$.
	\end{enumerate}
\end{theorem}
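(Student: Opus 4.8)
The plan is to reduce the theorem to a trichotomy on the two integers $|sq_1|$ and $|sq_2|$. The standing convention $|sq_1|<|SQ_1|$ and $|sq_2|<|SQ_2|$ already fixes part of the ordering, so it suffices to decide how $|sq_1|$ compares with $|sq_2|$ and then invoke the results of Subsections~\ref{subsec:Equal2FS} and~\ref{subsec:Unqual2FS} to eliminate the impossible configuration and to name the two that survive. Exactly one of $|sq_1|=|sq_2|$, $|sq_1|>|sq_2|$, $|sq_1|<|sq_2|$ holds, and I would handle these three cases in order.

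First I would dispose of $|sq_1|>|sq_2|$. If $|SQ_1|\neq|SQ_2|$ then Lemma~\ref{lem:s0>s1,S0>S1 and s0>s1,S0<S1} forces $|sq_1|<|sq_2|$, while if $|SQ_1|=|SQ_2|$ then Theorem~\ref{thrm:equal2FS} forces $|sq_1|=|sq_2|$; in either situation $|sq_1|>|sq_2|$ is impossible, so a one-line citation closes this case. Next, if $|sq_1|=|sq_2|$, Theorem~\ref{thrm:equal2FS} yields $|SQ_1|=|SQ_2|$, which is type~(a), and Lemma~\ref{lem:EqualLenConseSq} (equivalently Corollary~\ref{cor:lenOfEqual2FS}) shows the two FS-double squares, and likewise their shorter squares, are conjugates --- exactly the description of an equal 2FS square. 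Finally, if $|sq_1|<|sq_2|$, Theorem~\ref{thrm:unequal2FS} gives $|SQ_1|<|sq_2|$, and combining this with $|sq_1|<|SQ_1|$ and $|sq_2|<|SQ_2|$ produces the strict chain $|sq_1|<|SQ_1|<|sq_2|<|SQ_2|$, which is type~(b); Lemma~\ref{lem:s1<s2implyS1<S2} may be cited as a sanity check that $SQ_2^2$ ends after $SQ_1^2$ in this case as well.

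What remains is to record that the two types are mutually exclusive, since type~(a) has $|sq_1|=|sq_2|$ and type~(b) has $|sq_1|<|sq_2|$, so ``exactly one'' is justified, and to note that every invoked result requires only that $w$ be a 2FS square, which is the standing hypothesis. I do not expect a real obstacle: all the combinatorial work has already been carried out in Lemmas~\ref{lem:SQ0=SQ1}--\ref{lem:s0<s1<=s0+S0,S0<S1} together with Theorems~\ref{thrm:equal2FS} and~\ref{thrm:unequal2FS}, and the present statement is an assembly step. The only point deserving a moment of care is confirming that the three length cases are genuinely exhaustive, which is immediate.
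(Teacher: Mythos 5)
Your proposal is correct and is essentially the paper's own argument: the paper also proves this theorem purely as an assembly step, enumerating the possible orderings of $|sq_1|,|sq_2|,|SQ_1|,|SQ_2|$ in two tables and discharging each via Theorem~\ref{thrm:equal2FS}, Lemma~\ref{lem:s0>s1,S0>S1 and s0>s1,S0<S1}, Lemma~\ref{lem:s1<s2implyS1<S2}, Lemma~\ref{lem:s0<s1<=s0+S0,S0<S1} and Theorem~\ref{thrm:unequal2FS}. Your trichotomy on $|sq_1|$ versus $|sq_2|$ is merely a tidier organization of the same case analysis, invoking the same prior results.
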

\begin{proof} The list of possible conditions after removing equivalent cases and those cases that violate trivial conditions, such as $|sq_1| > |SQ_1|$, is given in the second column of Tables \ref{tab:infeasible} and \ref{tab:feasible}. The third column of the Tables gives the lemma number that investigates the corresponding case. Among the thirteen cases, a 2FS square is possible only under two conditions, viz. Case \ref{caseone} and \ref{casetwo} (see Table \ref{tab:feasible}).
	
	\begin{table}[!htb]
		\begin{center}
			\begin{tabular}{r@{\hspace*{10pt}}l@{\hspace*{10pt}}r}
				\hline \bf Case & \hspace*{1cm} \bf Condition & \bf Result \\\hline
				\rownumber & $|sq_1| < |sq_2| < |SQ_1| = |SQ_2|$ & \multirow{4}{*}{Theorem \ref{thrm:equal2FS}}\\
				\rownumber & $|sq_2| < |sq_1| < |SQ_1| = |SQ_2|$ & \\
				\rownumber & $|sq_1| = |sq_2| < |SQ_1| < |SQ_2|$ & \\
				\rownumber & $|sq_1| = |sq_2| < |SQ_2| < |SQ_1|$ & \\
				\hline 
				\rownumber & $|sq_2| < |sq_1| = |SQ_2| < |SQ_1|$ & \multirow{4}{*}{Lemma \ref{lem:s0>s1,S0>S1 and s0>s1,S0<S1}}\\
				\rownumber & $|sq_2| < |sq_1| < |SQ_2| < |SQ_1|$ & \\
				\rownumber & $|sq_2| < |SQ_2| < |sq_1| < |SQ_1|$ & \\
				\rownumber & $|sq_2| < |sq_1| < |SQ_1| < |SQ_2|$ & \\
				\hline 
				\rownumber & $|sq_1| < |sq_2| < |SQ_2| < |SQ_1|$ & Lemma \ref{lem:s1<s2implyS1<S2}\\\hline
				\rownumber & $|sq_1| < |SQ_1| = |sq_2| < |SQ_2|$ & \multirow{2}{*}{Lemma \ref{lem:s0<s1<=s0+S0,S0<S1}}\\
				\rownumber & $|sq_1| < |sq_2| < |SQ_1| < |SQ_2|$ & \\
				\hline
			\end{tabular}
		\end{center}
		\caption{Infeasible relations between lengths of squares for 2FS squares}
		\label{tab:infeasible}
		\vspace{-4mm}
	\end{table}
	
	\begin{table}[!htb]
		\begin{center}
			\begin{tabular}{r@{\hspace*{10pt}}l@{\hspace*{10pt}}c}
				\hline \bf Case & \hspace*{1cm}\bf Condition & \bf Result \\\hline
				\rownumber \label{caseone} & $|sq_1| = |sq_2| < |SQ_1| = |SQ_2|$ & Theorem \ref{thrm:equal2FS} \\
				\rownumber \label{casetwo} & $|sq_1| < |SQ_1| < |sq_2| < |SQ_2|$ & Theorem \ref{thrm:unequal2FS} \\
				\hline
			\end{tabular}
		\end{center}
		\caption{Feasible relations between lengths of squares for 2FS squares}
		\label{tab:feasible}
		\vspace{-8mm}
	\end{table} 
\end{proof}

\begin{corollary}
	The two FS-double squares starting at adjacent locations are either $\alpha$ or $\delta$ mates.
\end{corollary}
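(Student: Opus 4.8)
The plan is to read the statement off Theorem~\ref{thrm:2FS}, which already fixes a 2FS square to be of exactly one of two shapes, and to match each shape with one of the mate types recalled in Subsection~\ref{subsec:mates}. Given two FS-double squares at adjacent locations of some word, pass to the suffix that starts at the first of them; this does not change the relevant $s$-values, so we may assume the two squares occur at locations $1$ and $2$ as $SQ_1^2$ and $SQ_2^2$. Then $SQ_2^2$ is some mate ($\alpha$, $\beta$, $\gamma$, $\delta$, or $\epsilon$) of $SQ_1^2$ with $k=2$, and it suffices, in each of the two cases of Theorem~\ref{thrm:2FS}, to rule out $\beta$, $\gamma$, $\epsilon$ and to confirm $\alpha$ or $\delta$.

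First I would treat the equal case. If $w$ is an equal 2FS square, then $|SQ_1|=|SQ_2|$, and Lemma~\ref{lem:EqualLenConseSq} gives that $SQ_1$ and $SQ_2$ are conjugates. That is precisely the defining property of an $\alpha$ mate, so $SQ_2^2$ is an $\alpha$ mate of $SQ_1^2$. A $\beta$ mate would in addition require $|sq_1|<|sq_2|$ and a $\gamma$ mate would require $|sq_2|=|SQ_1|$, but here $|sq_1|=|sq_2|<|SQ_1|$ by Theorem~\ref{thrm:equal2FS}, so neither applies; and the conjugacy of $SQ_1$ and $SQ_2$ already certifies membership in the $\alpha$ family, which excludes the $\epsilon$ case.

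Next the unequal case. If $w$ is an unequal 2FS square, then by Theorem~\ref{thrm:2FS}(b) we have $|sq_1|<|SQ_1|<|sq_2|<|SQ_2|$, so in particular $|sq_2|>|SQ_1|$, which is condition~(2) of Definition~\ref{def:delta}. For condition~(1) I would verify $1<2<(p_1-1)|x_1x_2|+|lcp(x_1x_2,x_2x_1)|$, and for conditions~(3)/(4) I would use the factorization of $SQ_2^2$ (hence of its prefix $sq_2$) recorded right after Theorem~\ref{thrm:unequal2FS}, namely $SQ_2^2 = x_1'x_2(x_1x_2)^{p_1-1}(x_1)(x_1x_2)^{p_1+p_2}\cdots$: when $p_1=1$ its prefix $sq_2$ begins with $s\,x_2x_1(x_1x_2)^{p_1+p_2-1}x_1$ for a nonempty suffix $s$ of $x_1$ (condition~(3)), and when $p_1>1$ it begins with $s(x_1x_2)^{i}x_1(x_1x_2)^{p_1+p_2-1}x_1$ for a suffix $s$ of $x_1x_2$ and some $i\geq 1$ (condition~(4)). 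Hence $SQ_2^2$ is a $\delta$ mate of $SQ_1^2$, and together with the equal case this proves the corollary.

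I expect the bookkeeping in the unequal case to be the main obstacle: one must extract the canonical factorization of $sq_2$ from Theorem~\ref{thrm:unequal2FS} precisely enough to display the exact prefix demanded by Definition~\ref{def:delta} (including a length check that the prefix in question does not overrun $sq_2$), and one must secure the range constraint~(1) at the small index $k=2$, which isolates the corner $p_1=1$ with short $|lcp(x_1x_2,x_2x_1)|$. In that corner I would argue, in the style of the impossibility lemmas of this section, that either location~$2$ cannot carry an FS-double square at all, or the ostensible $\epsilon$ situation coincides with one already classified as $\alpha$; every remaining step is an immediate appeal to Theorem~\ref{thrm:2FS}, Lemma~\ref{lem:EqualLenConseSq}, and the discussion surrounding Theorems~\ref{thrm:equal2FS} and~\ref{thrm:unequal2FS}.
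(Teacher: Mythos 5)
Your proposal is correct and follows essentially the same route as the paper: the corollary is read off Theorem~\ref{thrm:2FS}, with the equal case giving an $\alpha$ mate via conjugacy (Lemma~\ref{lem:EqualLenConseSq}, Theorem~\ref{thrm:equal2FS}) and the unequal case matched to Definition~\ref{def:delta} through the factorization $SQ_2^2=x_1'x_2(x_1x_2)^{p_1-1}(x_1)(x_1x_2)^{p_1+p_2}\cdots$ discussed right before the corollary. If anything, you are more careful than the paper about the range condition $2<(p_1-1)|x_1x_2|+|lcp(x_1x_2,x_2x_1)|$ in Definition~\ref{def:delta}, which the paper passes over by simply asserting that adjacent FS-double squares fall among the four mates.
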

A single letter is appended to an FS-double square in order to generate an equal 2FS square.  However, in case of an unequal 2FS square, overall word length depends upon the length of $SQ_2^2$. To find the minimum length of an unequal 2FS square, we first obtain some properties of the square $sq_2^2$ which are used to generate the smallest $SQ_2^2$.
\begin{lemma}
	The relation $|sq_{2}|>|SQ_1|+|sq_1|$ holds for an unequal 2FS square.
\end{lemma}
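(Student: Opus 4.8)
The plan is to argue by contradiction. By Theorem~\ref{thrm:2FS} an unequal $2$FS square satisfies $|sq_1| < |SQ_1| < |sq_2| < |SQ_2|$, so assume, contrary to the claim, that $|sq_2| \le |SQ_1| + |sq_1|$, i.e.\ $|SQ_1| < |sq_2| \le |SQ_1| + |sq_1|$. I want to show that this pins down the word at location $2$ tightly enough to force either $x_1x_2$ or $y_1y_2$ to be non-primitive (contradicting Lemma~\ref{lem:primitiveBases}), or to force $sq_2^2$ to reoccur strictly after location $2$ (contradicting its last-occurrence property).

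First I would make the overlap of the two FS-double squares explicit. Since $SQ_2^2$ starts at location $2$ and $|SQ_2| > |SQ_1|$, the word $SQ_1^2$ is a prefix of $a \cdot SQ_2^2$; deleting the leading $a$ and passing to the conjugate $u := x_1'x_2a$ of $x_1x_2 = ax_1'x_2$, the word at location $2$ is seen to begin with
\[
W \;=\; u^{p_1}(x_1'a)\,u^{p_1+p_2}(x_1'a)\,u^{p_2-1}(x_1'x_2),
\]
a word of length exactly $2|SQ_1|-1$ (note $p_2\ge 1$ because $sq_1 \neq SQ_1$, and in fact $|SQ_1|-|sq_1| = p_2|x_1x_2| \ge 2$ since $x_1,x_2$ are nonempty, so $|sq_2| \le 2|SQ_1|-2$). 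Hence $sq_2$ is a prefix of $W$, and since $|sq_2| > |SQ_1| = |u^{p_1}(x_1'a)u^{p_2}|$ we may write $sq_2 = u^{p_1}(x_1'a)\,u^{p_2}\,\nu$ with $\nu$ a nonempty prefix of $u^{p_1}(x_1'a)$ and $|\nu| = |sq_2|-|SQ_1| \le |sq_1|$.

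Finally I would use that $sq_2^2$ is a prefix of $SQ_2^2$, hence of $W\cdots$: the known part $W$ already determines the first $2|SQ_1|-1-|sq_2| = |SQ_1|-|\nu|-1 \ge p_2|x_1x_2|-1$ letters of the \emph{second} copy of $sq_2$. Equating this with the matching prefix of $sq_2 = u^{p_1}(x_1'a)u^{p_2}\nu$ — and, where this known part is short (small $p_2$, small $|x_1x_2|$), continuing with the extra letters of $w$ forced by the internal periodicity of $SQ_2^2$ itself — yields a nontrivial periodicity inside $u^{p_1}(x_1'a)u^{p_2-1}(x_1'x_2)$ whose period is controlled by $|\nu|$. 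I expect the analysis to branch on where $|\nu|$ lies relative to the block boundaries $|x_1x_2|,2|x_1x_2|,\dots,p_1|x_1x_2|$, mirroring the two alternatives of Definition~\ref{def:delta} (a suffix of $x_1$, versus a suffix of $x_1x_2$ followed by full powers of $x_1x_2$), with the endpoint $|sq_2| = |SQ_1|+|sq_1|$, where $\nu = u^{p_1}(x_1'a)$ and $sq_2 = u^{p_1}(x_1'a)u^{p_1+p_2}(x_1'a)$, handled separately. In every branch the periodicity relation, after Fine--Wilf-type cancellation against the $x_1x_2$-periodicity inherited from $SQ_1^2$, collapses to $x_1x_2 = x_2x_1$, or exhibits a square $(y_1y_2)^2$ with $y_1y_2$ non-primitive, or produces a copy of $sq_2^2$ beyond location $2$ — the desired contradiction. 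The main obstacle is bookkeeping: enumerating these subcases and, in the tightest ones, checking that enough letters of $w$ are pinned down to run the cancellation; conceptually each branch is the same clash between the $x_1x_2$-period from $SQ_1^2$ and the $y_1y_2$-period of $sq_2^2$.
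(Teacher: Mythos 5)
Your setup --- passing to the conjugate $u=x_1'x_2a$, observing that the word at location $2$ begins with $W=u^{p_1}(x_1'a)u^{p_1+p_2}(x_1'a)u^{p_2-1}(x_1'x_2)$ of length $2|SQ_1|-1$, and writing $sq_2=u^{p_1}(x_1'a)u^{p_2}\nu$ with $1\le|\nu|\le|sq_1|$ --- is exactly the alignment the paper uses, and the intended contradictions (non-primitivity of $y_1y_2$, or $x_1x_2=x_2x_1$, via Lemmas \ref{lem:2FS structure} and \ref{lem:primitiveBases}) are the right targets. But the proposal stops where the proof begins: the case analysis and the cancellations are only announced (``I expect the analysis to branch\ldots'', ``the main obstacle is bookkeeping''), never carried out. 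The paper's proof consists of precisely two such cases: for $|SQ_1|<|sq_2|<|SQ_1|+|sq_1|$, the second copy of $sq_2$ places an occurrence of $(y_1y_2)^{q_1}$ against the factor $(y_1y_2)^{q_2}(y_1y_2)^{q_1}$ of $SQ_2^2$ with a nontrivial offset, so $y_1y_2$ is non-primitive --- that part of your plan would go through once written out.

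The genuine gap is the endpoint case $|sq_2|=|SQ_1|+|sq_1|$, which cannot be closed by the letter-matching you describe. There the portion of the second copy of $sq_2$ overlapping the known prefix is $(x_1'x_2a)^{p_2-1}x_1'x_2$, of length $p_2|x_1x_2|-1$, which is perfectly consistent with the prefix $(x_1'x_2a)^{p_2}$ of $sq_2$ (recall $p_2\le p_1$); in fact the period $|sq_2|$ of $sq_2^2$ then \emph{forces} the letter of $w$ immediately following $SQ_1^2$ to be $a$, so no periodicity clash is available from the pinned-down letters. The paper closes this case by a non-local argument you do not supply: it first shows that the letter after $SQ_1^2$ must be some $b\ne a$, since appending $a$ would make location $2$ start with the rightmost conjugates of $sq_1^2$ and $SQ_1^2$, i.e.\ an equal 2FS square (Lemma \ref{lem:SQ0=SQ1}, Theorem \ref{thrm:equal2FS}), contradicting the hypothesis that the 2FS square is unequal; only then does the alignment, which yields $b=a$, give a contradiction. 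Your proposed substitute, ``the internal periodicity of $SQ_2^2$ itself'', does not do this job: the period $|SQ_2|$ does not reach back into the known prefix, and the period $|sq_2|$ merely reproduces the letter $a$ that has to be excluded by the last-occurrence argument above. Until you establish that the letter following $SQ_1^2$ differs from $a$ (or otherwise rule out the resulting configuration, e.g.\ by exhibiting a later occurrence of $sq_2^2$ or a third rightmost square at location $2$), the boundary case --- and with it the lemma --- remains unproved in your write-up.
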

\begin{proof}(\textit{By contradiction}) 
	Suppose the shorter square, $sq_2^2$, in an unequal 2FS square satisfies the relation $|sq_{2}|\leq|SQ_1|+|sq_1|$.  We have another relation, $|sq_1|<|SQ_1|<|sq_2|<|SQ_2|$, from Theorem \ref{thrm:unequal2FS}. 
	Assume that the square $SQ_1^2$ begins with a letter $a$. If it is appended by the same letter, then the second location begins with the last occurrence of a conjugate of $SQ_1^2$. Lemma \ref{lem:SQ0=SQ1} and Theorem \ref{thrm:equal2FS} show that $SQ_1^2.a$ is an equal 2FS square. Thus, a letter other than $a$, say $b$ appears at the location $2|SQ_1|$. We verify the structures of 2FS squares where the root $sq_2$ satisfies either $|sq_2|=|sq_1|+|SQ_1|$ or $|sq_2|<|sq_1|+|SQ_1|$.
	\begin{description}
		\item [Case1 $|sq_2|=|sq_1|+|SQ_1|$:] The roots of the two FS-double squares follow the alignment as shown in the next equation,
		\begin{align*}
		SQ_1^2=&a(x_1'x_2a)^{p_1}(x_1'a)(x_1'x_2a)^{p_2}(x_1'x_2a)^{p_1}(x_1'a)&&(x_1'x_2a)^{p_2-1}(x_1'x_2\mathbf{b})\cdots\\
		SQ_2^2=&~(y_1y_2)^{q_1}~~~~~~~~~~~~(y_1)&&\underbrace{(y_1y_2)^{q_1}(y_1)}_{(x_1'x_2a)^{p_1}(x_1'a)\cdots}\cdots
		\end{align*}
		If $|lcp(x_1x_2,x_2x_1)|>0$ and the second location starts with the rightmost conjugate of $sq_1$, then to avoid an equal 2FS square, the word $SQ_1^2$ must be appended by any letter except `$a$' and the word structure is given in the first equation of above equation set. Here, the second occurrence of $sq_2$ shows that the word $(x_1'x_2a)^{p_2-1}(x_1'x_2b)$ is a prefix of $(x_1'x_2a)^{p_1}$. We get $b=a$, thus a contradiction.
		\item [Case2 $|SQ_1|<|sq_2|<|sq_1|+|SQ_1|$:] The following arrangement shows different squares in a given 2FS square.
		\begin{align}
		SQ_1^2=&a(x_1'x_2a)^{p_1}(x_1'a)\overbrace{(x_1'x_2a)^{p_2}(x_1'x_2a)^{p_1}}^{(y_1y_2)^{q_1}\cdots}(x_1'a)(x_1'x_2a)^{p_2-1}(x_1'x_2\mathbf{b})\cdots\label{1}\\
		SQ_2^2=&~(y_1y_2)^{q_1}(y_1)~~~~~~~\mathbf{(y_1y_2)^{q_2}(y_1y_2)^{q_1}}(y_1)~~~~~~~(y_1y_2)^{q_2}\label{2}
		\end{align}
		It can be concluded from the marked and highlighted factors in Equation (\ref{1}) and Equation (\ref{2}), respectively, that $y_1y_2$ is a non-primitive word. However, an FS-double square starting in the second location indicates that $y_1y_2$ is a primitive word. This is a contradiction. 
	\end{description}
\end{proof}
\begin{lemma}\label{lem:peoperties of 2FS}
	The following statements hold for an unequal 2FS square with $SQ_1^2=((x_1x_2)^{p_1}(x_1)(x_1x_2)^{p_2})^2$ and $SQ_2^2={((y_1y_2)^{q_1}(y_1)(y_1y_2)^{q_2})}^2$:
	\begin{enumerate}[(a)]
		\item \label{c1} $|sq_2|\geq |SQ_1|+|sq_1|+(p_2-1)(|x_1|+|x_2|)$,
		\item \label{c2} $|y_1y_2|>|x_1x_2|$, and
		\item \label{c3} $|SQ_2|>2|SQ_1|$.
	\end{enumerate}
\end{lemma}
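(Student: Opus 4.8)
The plan is to establish the three bounds in the order (a), (b), (c): part (a) carries the structural work, part (b) is a short periodicity argument, and part (c) is a one-line length count from (a) and (b). Throughout I use the identities $|SQ_1|=(p_1+p_2)|x_1x_2|+|x_1|$ and $|sq_1|=p_1|x_1x_2|+|x_1|$, so that $|SQ_1|=|sq_1|+p_2|x_1x_2|$; that $p_2\geq 1$ and $q_2\geq 1$ (otherwise $SQ_i=sq_i$, contradicting $|sq_i|<|SQ_i|$); that $x_1x_2$ and $y_1y_2$ are primitive (Lemmas~\ref{lem:2FS structure} and~\ref{lem:primitiveBases}); the ordering $|sq_1|<|SQ_1|<|sq_2|<|SQ_2|$ from Theorem~\ref{thrm:unequal2FS}; and the inequality $|sq_2|>|SQ_1|+|sq_1|$ proved just above.

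For (a) I argue by contradiction, assuming $|sq_2|<|SQ_1|+|sq_1|+(p_2-1)(|x_1|+|x_2|)$; when $p_2=1$ this already contradicts $|sq_2|>|SQ_1|+|sq_1|$, so assume $p_2\geq 2$. Recall that $SQ_1^2=(x_1x_2)^{p_1}x_1(x_1x_2)^{p_2}(x_1x_2)^{p_1}x_1(x_1x_2)^{p_2}$ is a prefix of $w$ and that $sq_2^2$ occurs at location $2$ of $w$ (with $x_1=ax_1'$). From $|SQ_1|+|sq_1|<|sq_2|<|SQ_1|+|sq_1|+(p_2-1)|x_1x_2|$ one checks that the first copy of $sq_2$ ends strictly inside the trailing block $(x_1x_2)^{p_2}$ of the second $SQ_1$, and that the second copy of $sq_2=(y_1y_2)^{q_1}y_1$ begins at a position that is still inside $SQ_1^2$. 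Reading the second copy of $sq_2$ against the $(x_1x_2)$-periodic trailing block and equating the two descriptions of their common factor --- splitting into cases according to whether $|sq_2|-|SQ_1|-|sq_1|$ is a multiple of $|x_1x_2|$, a multiple of $|x_1x_2|$ plus $|y_1|$, or some other residue, in the manner of the case analyses in Lemmas~\ref{lem:sq0=sq1,SQ0<SQ1} and~\ref{lem:s1<s2implyS1<S2} --- each case produces $x_1x_2=x_2x_1$ or $y_1y_2=y_2y_1$, contradicting the primitivity of $x_1x_2$, respectively $y_1y_2$. Hence $|sq_2|\geq|SQ_1|+|sq_1|+(p_2-1)|x_1x_2|=2|SQ_1|-|x_1x_2|$.

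For (b), suppose towards a contradiction that $|y_1y_2|\leq|x_1x_2|$. Inside $SQ_1^2$ the suffix $(x_1x_2)^{p_2}$ of the first $SQ_1$ and the prefix $(x_1x_2)^{p_1}$ of the second $SQ_1$ abut, forming a run $(x_1x_2)^{p_1+p_2}$ occupying the positions $|sq_1|+1$ through $|SQ_1|+|sq_1|$; since $|sq_2|>|SQ_1|+|sq_1|$, this run is a factor of the first copy of $sq_2=(y_1y_2)^{q_1}y_1$, which has period $|y_1y_2|$. So the run has periods $|x_1x_2|$ and $|y_1y_2|$, and as its length $(p_1+p_2)|x_1x_2|\geq 2|x_1x_2|\geq|x_1x_2|+|y_1y_2|$ the Fine--Wilf theorem gives it period $\gcd(|x_1x_2|,|y_1y_2|)$; since the run contains $(x_1x_2)^2$ and $x_1x_2$ is primitive, this gcd equals $|x_1x_2|$, forcing $|y_1y_2|=|x_1x_2|=:\ell$. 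Now the first copy of $sq_2$ has period $\ell$ and covers the positions $2,\ldots,1+|sq_2|$, in particular $2,\ldots,|SQ_1|$; together with $w_1=w_{1+\ell}$ (both equal $a$) this makes $SQ_1$ a prefix of $(x_1x_2)^{\infty}$, and comparing with $SQ_1=(x_1x_2)^{p_1}x_1(x_1x_2)^{p_2}$ yields $x_1(x_1x_2)^{p_2}=(x_1x_2)^{p_2}x_1$. Thus $x_1$ commutes with the nonempty $(x_1x_2)^{p_2}$ and is therefore a power of the primitive word $x_1x_2$; since $|x_1|<|x_1x_2|$ this gives $x_1=\epsilon$, a contradiction. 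Hence $|y_1y_2|>|x_1x_2|$.

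Part (c) is then immediate: by $q_2\geq 1$ and (b), $|SQ_2|=|sq_2|+q_2|y_1y_2|\geq|sq_2|+|y_1y_2|>|sq_2|+|x_1x_2|$, and substituting the bound $|sq_2|\geq 2|SQ_1|-|x_1x_2|$ from (a) gives $|SQ_2|>2|SQ_1|$. I expect the bottleneck to be part (a): pushing the known bound $|sq_2|>|SQ_1|+|sq_1|$ up by the full $(p_2-1)|x_1x_2|$ requires careful bookkeeping of how the two copies of $sq_2$ lie over the $(x_1x_2)$-runs of $SQ_1^2$, with a residue case split each branch of which must be driven to a primitivity contradiction; the arguments for (b) and (c) are comparatively routine.
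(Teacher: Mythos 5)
The gap is in part (a), which is where the real content of the lemma lies. Your plan is to drive every residue case of $d:=|sq_2|-|SQ_1|-|sq_1|$ modulo $|x_1x_2|$ to a commutation relation $x_1x_2=x_2x_1$ or $y_1y_2=y_2y_1$, but the aligned case, where $d$ is a positive multiple of $|x_1x_2|$, yields no such relation. In that case the second copy of $sq_2$ begins exactly at a block boundary inside the trailing run $(x_1x_2)^{p_2}$ of the second $SQ_1$; its overlap with $SQ_1^2$ (of length $p_2|x_1x_2|-1-d$) then agrees letter for letter with the corresponding prefix of $sq_2$, because both are governed by the same period $|x_1x_2|$. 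Everything inside $SQ_1^2$ is consistent, so no primitivity contradiction can be extracted from the overlap alone; the only way to refute this case is to look one position past $SQ_1^2$: matching the two copies of $sq_2$ at the letter in position $2|SQ_1|+1$ forces that letter to be $a$, while for an unequal 2FS square it must be different from $a$ (otherwise position $2$ starts with a conjugate of $sq_1^2$ and $SQ_1^2$ and the word is an equal 2FS square, or $s_1$ drops below $2$) --- a fact that itself needs justification from the definition of an unequal 2FS square. This is precisely the ingredient the paper's proof rests on: both of its cases terminate in ``the letters $a$ and $b$ are the same'' (equivalently $u_2=u_2'$), not in a non-primitivity statement. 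Your sketch never introduces, let alone justifies, the requirement that the letter following $SQ_1^2$ differs from $a$, so the lower bound in (a), and with it your restatement $|sq_2|\ge 2|SQ_1|-|x_1x_2|$, is not established.

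Parts (b) and (c) are sound. Your proof of (b) via Fine--Wilf applied to the run $(x_1x_2)^{p_1+p_2}$ sitting inside the first copy of $sq_2$ is a valid and arguably tighter alternative to the paper's factor-matching argument (which reaches the same non-primitivity contradiction for $y_1y_2$), and your (c) is the same one-line length count the paper intends, using $q_2\ge 1$ together with (a) and (b). But since (c) consumes the bound from (a), the gap above propagates to it; to complete the proof you must add the analysis of the letter at position $2|SQ_1|+1$ in the aligned case of (a).
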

\begin{proof}
	\begin{enumerate}[(a)]
		\item The structure of $SQ_2^2$ in the 2FS square is as follows:
		$$SQ_2^2=a(x_1'x_2a)^{p_1}(x_1'a)(x_1'x_2a)^{p_2}(x_1'x_2a)^{p_1}(x_1'a)(x_1'x_2a)^{p_2-1}(x_1'x_2\mathbf{b})\cdots$$
		The FS-double square $SQ_1^2$ must end with a letter other than `$a$', else the first location begins with only one square or the given 2FS square will be an equal 2FS square. So, the length of $y_1$ must be less than $p_2|x_1x_2|$  $=(p_2-1)|(x_1'x_2a)|+|(x_1'x_2b)|$. We now explore the possible structures of 2FS squares for different $sq_2^2$.
		\newline Case1 : $sq_2^2=(x_1'x_2a)^{p_1}(x_1'a)(x_1'x_2a)^{p_1+p_2}(x_1'a)(x_1'x_2a)^{p_3}$, where $p_3<(p_2-1)$ -- Given 2FS square can be written as follows,
		\begin{align*}
		a\underbrace{(x_1'x_2a)^{p_1}(x_1'a)(x_1'x_2a)^{p_1+p_2}(x_1'a)(x_1'x_2a)^{p_3}}_{P}\underbrace{(x_1'x_2a)^{p_2-p_3-1}(x_1'x_2b) \cdots }_{S}
		\end{align*}
		Note that, in above equation, $P$ is also a prefix of $S$ and $p_2-p_3-1<p_1-1$. This shows that the letters $a$ and $b$ are the same. So, such a structure does not exist.
		\newline Case2: $sq_2=(x_1'x_2a)^{p_1}(x_1'a)(x_1'x_2a)^{p_1+p_2}(x_1'a)(x_1'x_2a)^{p_3}u_1$, where $x_1'x_2a=u_1u_2$ and $x_1'x_2b=u_1u_2'$ such that $u_2\neq u_2'$ -- The 2FS square has a structure,
		\begin{align*}
		a\underbrace{(x_1'x_2a)^{p_1}(x_1'a)(x_1'x_2a)^{p_1+p_2}(x_1'a)(x_1'x_2a)^{p_3}u_1}_{P=sq_1}\underbrace{u_2(x_1'x_2a)^{p_2-p_3-2}(x_1'x_2b) \cdots }_{S}
		\end{align*}
		Again, by noting that $P$ is a prefix of $S$, we have $u_1u_2=u_2u_1$. The relation $p_2-p_3-2<p_1$ implies that the word $u_2'$ is a prefix of $u_1$. This leads to a relation $u_2=u_2'$ which is infeasible.
		\newline So, \ref{c1} holds as the length of a root $sq_2$ satisfies the relation,
		$$|sq_2|\geq |((x_1'x_2a)^{p_1}(x_1'a)(x_1'x_2a)^{p_1+p_2}(x_1'a)(x_1'x_2a)^{p_2-1})|$$
		\item We now find the smallest value of $|y_1y_2|$ to get $|SQ_2|$. Let us first verify the structure of words where $|(y_1y_2)|\leq |(x_1x_2)|$. We assume, $x_1x_2a=u_1u_2'a$, $x_1'x_2b=u_1u_2'b$. We have, $sq_2=(y_1y_2)^{q_1}(y_1)$ and the given 2FS square is,
		\begin{align} \label{eq:eq16}
		a\overbrace{\underbrace{(x_1'x_2a)^{p_1}(x_1'a)(x_1'x_2a)^{p_2}}_{P=(y_1y_2)^{q_3}y_3}.\underbrace{(x_1'x_2a)^{p_1}(x_1'a)(x_1'x_2a)^{p_2-1}(u_1}_{S=y_4(y_3y_4)^{q_4}y_1}.}^{sq_2=(y_1y_2)^{q_1}y_1}u_2'b)\ldots
		\end{align}
		According to Lemma \ref{lem:2FS structure}, $y_1y_2\neq y_2y_1$. Here, $(x_1'x_2a)^{p_1}(x_1'a)(x_1'x_2a)^{p_2}=(y_1y_2)^{q_3}y_3$ and $y_1y_2=y_3y_4$. Since $S$ is a prefix of $P$, $y_3y_4=y_4y_3$. This shows that the word $y_1y_2$ is a non-primitive word leading to a contradiction.
		\item The length of a longer root $SQ_2$ is equals to $|sq_2|+q_2|(y_1y_2)|$. The statements \ref{c1} and \ref{c2} give us $|SQ_2|>2|SQ_1|$.
	\end{enumerate}
\end{proof}
\begin{lemma}\label{lem:no of equal 2FS}
	Let $w$ be a word that starts with an FS-double square $SQ_1^2=((x_1x_2)^{p_1}(x_1)(x_1x_2)^{p_2})^{2}$, for some nonempty words $x_1,x_2$ and integers $p_1,p_2$ where $p_1\geq p_2\geq 1$. If the first $k$ locations in $w$ start with equal length FS-double squares, then
	$$
	k \leq \left\{ 
	\begin{array}{ll}
	\min(lcp(x_1x_2,x_2x_1), |x_1|-1) & \textnormal{ where } p_1=p_2\\
	lcp(x_1x_2,x_2x_1) & \textnormal{ otherwise} 
	\end{array}\right.
	$$
\end{lemma}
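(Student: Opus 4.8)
The plan is to re-derive Lemma~\ref{lem: max equal 2FS}, this time insisting that \emph{every} one of the first $k$ locations carries a genuine FS-double square --- so that the \emph{longer} square at each of these locations is itself a last occurrence in $w$ --- which is precisely the extra requirement that removes the ``$+1$'' appearing in Lemma~\ref{lem: max equal 2FS}.

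First I would pass to the conjugate picture. Since all $k$ longer squares have equal length, Corollary~\ref{cor:lenOfEqual2FS} (through Theorem~\ref{thrm:equal2FS}) makes $SQ_1^2,\dots,SQ_k^2$ pairwise conjugates, and likewise $sq_1^2,\dots,sq_k^2$ pairwise conjugates. Because $SQ_1$ is primitively rooted (Lemma~\ref{lem:primitiveBases}), Lemma~\ref{lem:appendingPrefix} forces the prefix of $w$ of length $2|SQ_1|+k-1$ to be $SQ_1^2$ followed by the prefix of $SQ_1$ of length $k-1$, and symmetrically reading only the inner squares. The key consequence to record is that, for the conjugates $SQ_2^2,\dots,SQ_k^2$ to occur at all, $w$ must extend past $SQ_1^2$ by exactly these $k-1$ letters.

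Next I would locate the overlap that governs the chain, exactly as in the proof of Lemma~\ref{lem: max equal 2FS}: write $SQ_1^2=sq_1^2\,(x_2x_1)^{p_2}(x_1x_2)^{p_2}$ with $sq_1^2=(x_1x_2)^{p_1}x_1(x_1x_2)^{p_1}x_1$; since $sq_1$ begins with the primitive word $x_1x_2$ (so $x_1x_2\neq x_2x_1$), the longest common prefix of $(x_2x_1)^{p_2}(x_1x_2)^{p_2}$ and $sq_1$ has length exactly $\ell:=lcp(x_1x_2,x_2x_1)$, whence conjugate copies of $sq_1^2$ occur at locations $2,\dots,\ell+1$ within $SQ_1^2$ and no further, giving a priori $k\le\ell+1$, and $k\le\min(\ell+1,|x_1|)$ in the balanced case $p_1=p_2$ (using the observation from Lemma~\ref{lem: max equal 2FS} that appending the prefix of $SQ_1$ of length $|x_1|$ re-creates $sq_1^2$ at position $|SQ_1|+1$). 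The substantive step --- the one I expect to be the main obstacle --- is to rule out equality in these two bounds. For that, append to $SQ_1^2$ the prefix of $SQ_1$ of length $\ell$ that is forced above, rewrite $SQ_1^2=(x_1x_2)^{p_1}x_1(x_1x_2)^{p_1+p_2}x_1(x_1x_2)^{p_2}$ (legitimate since $p_2\ge1$), and track the positions of the two non-periodic ``$x_1$-blocks'' of $SQ_1^2$ against the appended letters, using the commutation identities of the type $x_1(x_1x_2)^m=(x_1x_2)^mx_1\Rightarrow x_1x_2=x_2x_1$ that recur throughout Section~\ref{sec:structure of 2FS}. I expect this to show that the alignment dictated by the \emph{minimality} of $\ell$ forces a fresh occurrence of $sq_1$, hence of $sq_1^2$, at some position $\ge2$ of $w$ (for $p_1>p_2$ an occurrence straddling the trailing $(x_1x_2)^{p_2}$ of the second copy of $SQ_1$ into the appended prefix; for $p_1=p_2$ already the occurrence near position $|SQ_1|+1$, which a parallel length count shows is triggered one step earlier than the crude bound suggests). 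That contradicts $sq_1^2$ being a last occurrence, so $k-1\le\ell-1$ when $p_1>p_2$ and $k\le\min(\ell,|x_1|-1)$ when $p_1=p_2$, which is the claim.

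Finally I would treat separately the boundary case $p_1=1$, where $sq_1=x_1x_2x_1$ need not be primitively rooted and the rotation/periodicity argument does not apply directly; there I would revert to direct inspection of the short overlaps, mirroring how the analogous small cases are handled elsewhere in Section~\ref{sec:structure of 2FS}. The principal difficulty throughout is the precise length bookkeeping: pinning down where the two $x_1$-blocks of $SQ_1^2$ fall relative to the appended prefix, and confirming that the re-occurrence of $sq_1^2$ is triggered as soon as $k-1$ reaches $\ell$ (respectively $|x_1|-1$) rather than one step later.
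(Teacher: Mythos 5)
Your skeleton coincides with the paper's: conjugacy of the longer and the shorter squares via Theorem \ref{thrm:equal2FS} and Corollary \ref{cor:lenOfEqual2FS}, Lemma \ref{lem:appendingPrefix} to realize consecutive conjugates by appending a prefix of $sq_1$ (resp.\ $SQ_1$), the factorization $SQ_1^2=sq_1^2\,(x_2x_1)^{p_2}(x_1x_2)^{p_2}$ which, by primitivity of $x_1x_2$, limits the usable overlap to $\ell:=|lcp(x_1x_2,x_2x_1)|$, and, in the balanced case $p_1=p_2$, the reappearance of $sq_1^2$ at location $|SQ_1|$ once $SQ_1^2$ is extended by its own prefix of length at least $|x_1|$ (Equations (\ref{eq:lcp})--(\ref{eq:lcp2})). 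Up to that point you are reproducing the published argument; note also that the paper's counting never uses primitivity of $sq_1$, so the separate treatment you reserve for $p_1=1$ is not needed.

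The gap is exactly the step you yourself flag as the substantive one: lowering the bounds $\ell+1$ and $\min(\ell+1,|x_1|)$ of Lemma \ref{lem: max equal 2FS} to $\ell$ and $\min(\ell,|x_1|-1)$. You do not carry this out; you only state that you \emph{expect} the minimality of $\ell$ to force a fresh occurrence of $sq_1$, hence of $sq_1^2$, at some position $\geq 2$. As sketched this does not go through: a new occurrence of the root $sq_1$ does not by itself produce a new occurrence of the square $sq_1^2$, and for $p_1>p_2$ appending the $\ell$ forced letters creates no repetition of $sq_1^2$ or $SQ_1^2$ that could contradict the last-occurrence property --- indeed Lemma \ref{lem: max equal 2FS} of the same paper allows $\ell+1$ equal-length FS-double squares in precisely that situation, so no contradiction can be extracted at step $\ell+1$ by the commutation identities you invoke. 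The paper's own proof of Lemma \ref{lem:no of equal 2FS} attempts no such sharpening: it directly counts the consecutive conjugate occurrences of $sq_1^2$ produced by the overlap as $\ell$, caps them at $|x_1|-1$ when $p_1=p_2$ via Equation (\ref{eq:lcp2}), and takes the minimum; the one-unit difference from Lemma \ref{lem: max equal 2FS} is a matter of how the first location is counted, not the result of an additional combinatorial argument. So the decisive mechanism your plan relies on is both unexecuted and, as described, not workable.
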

\begin{proof}
	As per the definition of an equal 2FS square, all the roots 
	$sq_1,sq_2,\\ \ldots,sq_k$ are of equal length and are conjugates. From Lemma \ref{lem:appendingPrefix}, it is required to append the prefix of $sq_1$ to the square $sq_1^2$ to obtain the conjugates of $sq_1$ at successive locations. 
	
	For $p_1=p_2=p$, the square $sq_1^2$ in $SQ_1^2$ is followed by $(x_2x_1)^{p}(x_1x_2)^{p}$ (see Equation (\ref{eq:lcp})). Since $x_1x_2\neq x_2x_1$, the  number of conjugates of the square $sq_1^2$ is $|lcp(x_1x_2,x_2x_1)|$. However, $sq_1^2$ occurs at location $|SQ_1|$ if $SQ_1^2$ is appended by its own prefix of length $\geq |x_1|$. This is shown in Equation (\ref{eq:lcp2}).
	\begin{align}
	SQ_1^2&=\underbrace{(x_1x_2)^{p}(x_1)(x_1x_2)^{p}x_1}_{sq_1^2}(x_2x_1)^{p}(x_1x_2)^{p}\label{eq:lcp}\\
	SQ_1^2.x_1&=(x_1x_2)^{p}(x_1)(x_1x_2)^{p}\boxed{(x_1x_2)^{p}(x_1)(x_1x_2)^{p}(x_1)}\implies{sq_1^2}\label{eq:lcp2}
	\end{align}
	Thus, the number of conjugates of $sq_1^2$ that are adjacent to each other is less than or equal to $|x_1|-1$.
	Similarly, $SQ_1,SQ_2,\cdots,SQ_k$ are conjugates. Since $x_1x_2\neq x_2x_1$ the longest prefix is of $|lcp(x_1x_2,x_2x_1)|$ length. From Theorem \ref{thrm:equal2FS}, the number of consecutive FS-double squares depends upon the number of conjugates of both $SQ_1^2$ and $sq_1^2$. So, $k\leq \min(lcp(x_1x_2,x_2x_1), |x_1|-1)$. For $p_1>p_2$, $k\leq lcp(x_1x_2,x_2x_1)$ as appending $x_1$ to $SQ_1^2$ does not reduce the value of $s_1$.
\end{proof}

\section{Longest Sequence of 2's}\label{sec:longest sequence of FS}
We have shown that a word that starts with consecutive FS-double squares, any two consecutive squares follows the structure of either equal or unequal 2FS square. The $s_i$ sequence of such a word has a chain of $2$s in the beginning. A word $w$ has a sequence of 2's if $s_i(w)=s_{i+1}(w)=\cdots =s_j(w)=2$ where the integers $i,j$ satisfy $1\leq i<j\leq|w|$. It is possible to extend an FS-double square to get an arbitrarily long sequence of 2's. One way to achieve this is described in Lemma \ref{lem:no of equal 2FS}, where an FS-double square is appended by its prefix. In this case, all the consecutive FS-double squares in the beginning of a word are conjugates of each other. The number of such FS-double squares is finite, and the length of a sequence of 2's is limited. However, it is always possible to introduce an unequal 2FS square to increase the length of the sequence of 2's.
Thus, we can extend an FS-double square to get a sequence of 2's of any desired length by introducing a new equal or unequal 2FS square. A single letter is added to an FS-double square to introduce a new equal 2FS square, whereas an FS-double square is appended by many letters to get a new unequal 2FS square. Let us see some equal and unequal 2FS squares. Next is an example of equal 2FS square and its $s_i$ sequence. Here, $SQ_1^2=((aba)^1.(ab).(aba)^1)^2$ and $SQ_2^2=((baa)^1.(ba).(baa)^1)^2$.

\begingroup
\setlength{\tabcolsep}{1pt}
\renewcommand{\arraystretch}{.5}
\begin{center}
	\begin{tabular}{ccccccccccccccccccc}
		$w$ & = & $a$ & $b$ & $a$ & $a$ & $b$ & $a$ & $b$ & $a$ & $a$ & $b$ & $a$ & $a$ & $b$ & $a$ & $b$ & $a$ & $a$\\
		$s_i(w)$ & = & 2 & 2 & 0 & 0 & 0 & 0 & 1 & 1 & 1 & 0 & 0 & 1 & 1 & 0 & 0 & 1 & 0\\
	\end{tabular}
\end{center}
\endgroup
The word, $w$, if continued to be extended further with the prefix of $SQ_1^2$, then $sq_1^2$ repeats after the first location and the value of $s_1$ reduces to one. In such words, it is necessary to introduce unequal 2FS square to continue the sequence of 2's further. Unlike equal 2FS squares, the structures of unequal 2FS squares vary, and there are different ways to extend an FS-double square to get an unequal 2FS square. To elaborate this, we extend an FS-double square in two different ways to get two different unequal 2FS squares. Let $SQ_1^2=aabaaabaabaaab$ be an FS-double square which is extended to get two unequal 2FS squares $w_1$ and $w_2$, where
\begin{align}
w_1=a&((abaaabaabaaabb)(ab)(abaaabaabaaabb))^2\\
w_2=a&((abaaabaabaaabb)(abaaabaabaaabbab)(abaaabaabaaabb))^2
\end{align}
and the respective $s_i$ sequences are,
\newline \begingroup
\setlength{\tabcolsep}{1pt}
\renewcommand{\arraystretch}{.5}
\begin{center}
	\scalebox{0.95}{%
		\begin{tabular}{ccccccccccccccccccccccccccccccccccc}
			$w_1$ & = & a & a & b & a & a & a & b & a & a & b & a & a & a & b & b & a & b & a & b & a & a & a & b & a & a & b & a & a & a & b & b\\
			& & a & b & a & a & a & b & a & a & b & a & a & a & b & b & a & b & a & b & a & a & a & b & a & a & b & a & a & a & b & b &  \\
			$s_i(w_1)$ & = & 2 & 2 & 1 & 0 & 0 & 0 & 0 & 0 & 0 & 0 & 0 & 0 & 0 & 0 & 0 & 0 & 1 & 1 & 1 & 1 & 1 & 0 & 0 & 0 & 0 & 0 & 0 & 0 & 0 & 0 & 0 \\
			& & 0 & 0 & 0 & 0 & 0 & 0 & 0 & 0 & 0 & 0 & 0 & 0 & 0 & 0 & 1 & 1 & 1 & 0 & 0 & 1 & 1 & 1 & 0 & 0 & 0 & 0 & 1 & 0 & 1 & 0 & \\
	\end{tabular}}
\end{center}
\endgroup
\begingroup
\setlength{\tabcolsep}{1pt}
\renewcommand{\arraystretch}{.5}
\begin{center}
	\scalebox{0.95}{%
		\begin{tabular}{ccccccccccccccccccccccccccccccccccccccccccccccc}
			$w_2$ & = & a & a & b & a & a & a & b & a & a & b & a & a & a & b & b & a & b & a & a & a & b & a & a & b & a & a & a & b & b & a & b & a & b & a & a & a & b & a & a & b & a & a & a & b & b\\
			& & a & b & a & a & a & b & a & a & b & a & a & a & b & b & a & b & a & a & a & b & a & a & b & a & a & a & b & b & a & b & a & b & a & a & a & b & a & a & b & a & a & a & b & b\\
			$s_i(w_2)$ & = & 2 & 2 & 1 & 0 & 0 & 0 & 0 & 0 & 0 & 0 & 0 & 0 & 0 & 0 & 1 & 1 & 1 & 0 & 0 & 0 & 0 & 0 & 0 & 0 & 0 & 0 & 0 & 0 & 0 & 0 & 0 & 0 & 0 & 0 & 0 & 1 & 1 & 1 & 1 & 1 & 1 & 1 & 1 & 1 & 1\\
			& & 1 & 1 & 1 & 1 & 0 & 0 & 0 & 0 & 0 & 0 & 0 & 0 & 0 & 0 & 0 & 0 & 0 & 0 & 0 & 0 & 0 & 0 & 0 & 0 & 0 & 0 & 0 & 0 & 1 & 1 & 1 & 0 & 0 & 1 & 1 & 1 & 0 & 0 & 0 & 0 & 1 & 0 & 1 & 0\\
	\end{tabular}}
\end{center}
\endgroup

A word can be extended to get an unequal 2FS square at any location. Moreover, it is possible to yield an unequal 2FS square at a particular location $l$ such that it does not affect the $s_i$ value of another location $m$ where $0<m<l$. This new 2FS square almost doubles the overall word length though. So, we investigate the relationship between the length of the longest sequence of 2's and the word length. It is evident that in a word $w$, the ratio of the longest sequence of 2's to its word length is higher for equal 2FS squares. Following lemma computes the ratio for the sequence of 2's such that any two consecutive FS-double squares in the sequence follow the structure of an equal 2FS square.
\begin{lemma}[Longest sequence of 2's with Equal 2FS Squares]\label{lem:max_equal}
	Let $T$ be the longest sequence of consecutive FS-double squares in $w$ such that any two consecutive FS-double squares in $T$ are conjugates. Then, $\frac{|T|}{|w|}\leq \frac{1}{7}$.
\end{lemma}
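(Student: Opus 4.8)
The plan is to bound the length of a word $w$ that consists of nothing but a maximal run of consecutive FS-double squares, all of which are conjugates, from below by $7\,|T|$, where $|T|$ is the number of such squares. By Theorem \ref{thrm:equal2FS} and Lemma \ref{lem: max equal 2FS}, all the squares $SQ_1^2 = SQ_2^2 = \cdots = SQ_{|T|}^2$ in $T$ are conjugates of a single FS-double square $SQ_1^2 = ((x_1x_2)^{p_1}(x_1)(x_1x_2)^{p_2})^2$, and the shorter squares $sq_j^2$ are conjugates of $sq_1^2 = ((x_1x_2)^{p_1}x_1)^2$. By Lemma \ref{lem:appendingPrefix} and the analysis in Lemma \ref{lem: max equal 2FS}, the conjugates at consecutive locations are obtained by prepending/appending prefixes, so the whole word $w$ containing $T$ has length $|SQ_1^2| + (|T|-1) = 2|SQ_1| + |T| - 1$. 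Hence it suffices to show $2|SQ_1| \ge 7|T| - (\text{small error})$, i.e. roughly $|SQ_1| \ge \tfrac{7}{2}|T|$.

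First I would record the constraint from Lemma \ref{lem: max equal 2FS} (equivalently Lemma \ref{lem:no of equal 2FS}): $|T| \le |lcp(x_1x_2,x_2x_1)| + 1$ when $p_1 > p_2$, and $|T| \le \min(|lcp(x_1x_2,x_2x_1)|+1, |x_1|)$ when $p_1 = p_2$. Writing $\ell = |lcp(x_1x_2,x_2x_1)|$, note $\ell < |x_1x_2| = |x_1| + |x_2|$ since $x_1x_2$ is primitive. Next I would lower-bound $|SQ_1| = (2p_1 + 1)|x_1| + (2p_1)|x_2| + 2p_2|x_1x_2|$ — wait, more carefully, $|SQ_1| = (p_1+p_2)|x_1x_2| + |x_1|$. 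The worst case for the ratio is when this is as small as possible relative to $\ell$, i.e. $p_2$ as small as possible and $|x_1x_2|$ as small as possible. Since $x_1,x_2$ are nonempty, $|x_1|,|x_2|\ge 1$.

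The key case split is on whether $p_1 > p_2$ or $p_1 = p_2$, and within each whether $\ell+1$ or (in the equal case) $|x_1|$ is the binding bound. When $p_1 = p_2 = p$ and $|T| \le |x_1|$: then $2|SQ_1| = 2(2p|x_1x_2| + |x_1|) \ge 2(2|x_1x_2| + |x_1|) \ge 2(2(|x_1|+1) + |x_1|) = 6|x_1| + 4 \ge 6|T| + 4 \ge 7|T|$ once $|T| \le 4$; and if $|T|$ is large one uses $|x_1| \ge |T|$ together with $p \ge 1$, $|x_2|\ge 1$ to get $2|SQ_1| \ge 4(|x_1|+1) + 2|x_1| = 6|x_1| + 4 \ge 7|T|$ as soon as $|x_1|\ge |T|\ge 4$ — I would collect the finitely many small-$|T|$ exceptions and check them by hand or via the examples in the paper. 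When $|T| \le \ell+1 \le |x_1|+|x_2|$, I would use $2|SQ_1| \ge 2(2|x_1x_2| + |x_1|) = 6|x_1| + 4|x_2| \ge 3(|x_1|+|x_2|) + (3|x_1| + |x_2|) \ge 3\ell + \text{(positive)} \ge 3|T| + \cdots$ — so $3|T|$ comes for free and the remaining $4|T|$ must be squeezed out of the fact that $p_1 \ge 1$ always contributes a full extra $|x_1x_2| \ge \ell$ and, crucially, when $p_1 > p_2$ there is genuine slack because $p_2 \ge 1$ forces another $|x_1x_2|$.

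\emph{The main obstacle} I anticipate is exactly this constant-chasing: the bound $\tfrac{1}{7}$ is presumably tight (or nearly so) for one specific tiny configuration — likely $p_1 = p_2 = 1$, $|x_1| = |x_2| = 1$ with $x_1x_2 = ab$, giving $SQ_1^2$ of length $10$ and $|T| = \ell+1 = 2$, which is already a ratio of $2/11 > 1/7$ once the $|T|-1$ appended letters are counted, so the extremal example must have slightly larger parameters. I would identify the true extremal word (probably $|T|=2$, $p_1=p_2=1$, $|x_1|=2$, $|x_2|=1$, making $w$ the $12$-letter word $abaaababaaba$-type string from the paper's own example, of length about $14$–$15$), verify the ratio there equals $1/7$ or just below, and then argue monotonically that any deviation — increasing $p_1$, $p_2$, $|x_1|$, or $|x_2|$ — increases $|SQ_1|$ faster than it can increase the cap on $|T|$. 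The cleanest way to package this is: set $|T| = t$; from the constraints $t \le |x_1| + |x_2|$ and $t \le 2|x_1|$ (the latter only in the equal-$p$ subcase, but it is the tighter one in the regime that matters); then $2|SQ_1| - 7t + 1 = 2(p_1+p_2)|x_1x_2| + 2|x_1| - 7t + 1$, substitute the $p$'s by their minimum values and bound $t$ by the appropriate cap, reducing to a linear inequality in $|x_1|, |x_2|$ with nonnegative coefficients plus a bounded number of base cases, each dispatched directly.
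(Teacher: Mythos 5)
Your overall route is the same as the paper's: cap $|T|$ via Lemma \ref{lem: max equal 2FS}, write $|w|\ge 2|SQ_1|+|T|-1$, lower-bound $|SQ_1|=(p_1+p_2)|x_1x_2|+|x_1|$ using the minimal admissible exponents, and split on $p_1=p_2$ versus $p_1>p_2$. However, the reduction you actually commit to has a genuine flaw: you aim for $2|SQ_1|\ge 7|T|$ (later, ``$2|SQ_1|-7t+1\ge 0$''), which silently drops from the denominator the $|T|-1$ appended letters that you yourself counted when writing $|w|=2|SQ_1|+|T|-1$. That stronger inequality is not implied by the bounds you invoke and fails precisely in the regime where the ratio matters: for $p_1=p_2=1$, $|x_2|=1$ and $|T|$ at its cap ($\approx|x_1|$), one has $2|SQ_1|=6|x_1|+4$, so $2|SQ_1|\ge 7|T|-1$ breaks once $|x_1|$ is large, even though the lemma still holds because $|w|=2|SQ_1|+|T|-1\ge 7|T|$ (the ratio is $\tfrac{|x_1|-1}{7|x_1|+2}$). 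For the same reason your fallback cannot rescue it: the failures of $2|SQ_1|\ge 7|T|$ are not finitely many small cases, and there is no extremal word attaining $1/7$ — the ratio increases with $|x_1|$ and only converges to $1/7$ (as the paper notes), so ``identify the extremal word and argue monotonically'' is the wrong shape of argument. A further slip: the constraint ``$t\le 2|x_1|$'' in your final paragraph should be $t\le|x_1|$ (indeed $|x_1|-1$ by Lemma \ref{lem:no of equal 2FS}); with $2|x_1|$ the linear inequality does not close either.

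The repair is small and lands exactly on the paper's computation: keep the $|T|-1$ term, so it suffices to prove $6|T|+1\le 2|SQ_1|$. For $p_1=p_2$ use $|T|\le|x_1|$ and $2|SQ_1|\ge 2(2|x_1x_2|+|x_1|)=6|x_1|+4|x_2|$; for $p_1>p_2$ use $|T|\le|lcp(x_1x_2,x_2x_1)|+1\le|x_1|+|x_2|-1$ together with $p_1+p_2\ge 3$, giving $2|SQ_1|\ge 8|x_1|+6|x_2|$. Both resulting linear inequalities hold outright since $|x_1|,|x_2|\ge 1$, with no exceptional cases to check.
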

\begin{proof}
	Assume that the first FS-double square in $T$ is $((x_1x_2)^{p_1}(x_1)(x_1x_2)^{p_2})^2$ where $x_1,x_2\in \varSigma^+$ and integers $p_1,p_2$ satisfy the relation $p_1\geq p_2\geq 0$. From Lemma \ref{lem: max equal 2FS}, the length of $T$ depends on the values of $p_1$ and $p_2$. For $p_1=p_2$, The highest value of the ratio $|T|/|w|$ is,
	\begin{align*}
	\frac{|T|}{|w|}&=\frac{\min(|lcp(x_1x_2,x_2x_1)|,(|x_1|-1))}{2((p_1+p_2+1)|x_1|+(p_1+p_2)|x_2|)+|lcp(x_1x_2,x_2x_1)|}\\
	%&=\frac{\min((|x_1|+|x_2|-2),(|x_1|-1))}{2((p_1+p_2+1)|x_1|+(p_1+p_2)|x_2|)+|lcp(x_1x_2,x_2x_1)|}\\
	%&=\frac{|x_1|-1}{2((p_1+p_2+1)|x_1|+(p_1+p_2)|x_2|)+(|x_1|-1)}\\
	&=\frac{|x_1|-1}{(2p_1+2p_2+3)|x_1|+2(p_1+p_2)|x_2|-1} \leq \frac{1}{7}
	\end{align*}
	
	The highest value of the ratio $|T|/|w|$ for $p_1>p_2$ is,
	\begin{align*}
	\frac{|T|}{|w|}&= \frac{|lcp(x_1x_2,x_2x_1)|}{2((2+1+1)|x_1|+(2+1)|x_2|)+|lcp(x_1x_2,x_2,x_1)|}\\
	&=\frac{|x_1|+|x_2|-2}{8|x_1|+6|x_2|)+|x_1|+|x_2|-2}=\frac{|x_1|+|x_2|-2}{9|x_1|+7|x_2|)-2} \leq \frac{1}{7}
	\end{align*}
	Thus, the highest value of the ratio $\frac{|T|}{|SQ_i|}$ is $\frac{1}{7}$.
\end{proof}

The sequence of consecutive FS-double squares referred in Lemma \ref{lem:max_equal} can be further extended by adding a new unequal 2FS square. So, another way to generate a long sequence of $2$'s is to start with an FS-double square and extend it to add all possible conjugates of the square. At this point, we can append the word to generate an unequal 2FS square so that the sequence of $2$'s continues to grow. Thus, a sequence increases either with equal or unequal 2FS square. The length of such a sequence in a word with respect to the word length is computed in the following lemma.

\begin{lemma}[Longest Sequence of 2's with Equal and Unequal 2FS squares]\label{lem:Equal and unequal}
	Let $T$ be the longest sequence of FS-double squares in a word $w$ that obtained using the best combination of equal and unequal 2FS squares. Then, $\frac{|T|}{|w|}\leq\frac{1}{15}$.
\end{lemma}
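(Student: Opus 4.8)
The plan is to split the longest sequence $T$ into maximal runs of equal-length — hence, by Lemma~\ref{lem:EqualLenConseSq}, pairwise conjugate — FS-double squares, and to charge the length of each run against the length of $w$. By Theorem~\ref{thrm:2FS} every pair of FS-double squares at adjacent locations of $T$ is an equal or an unequal 2FS square, so $T$ decomposes into maximal blocks $B_1,\dots,B_r$ (listed by location) inside each of which all FS-double squares are conjugate and thus share a root length $m_j:=|SQ_{\ell_j}|$, where $\ell_j$ is the first location of $B_j$, while the last square of $B_j$ together with the first square of $B_{j+1}$ forms an \emph{unequal} 2FS square. Since a genuine combination is used, $r\ge 2$, and the inequality $|SQ_2|>2|SQ_1|$ for unequal 2FS squares (Lemma~\ref{lem:peoperties of 2FS}) forces geometric growth of the block roots: $m_{j+1}>2m_j$, so $m_j\le m_r/2^{r-j}$ and, in particular, $m_r\ge 2^{r-1}m_1$.

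I would then bound numerator and denominator of $|T|/|w|$ separately. For $|w|$: the locations of $T$ are consecutive, so the last FS-double square of $B_r$ begins at location $\ell_1+|T|-1$ and has root length $m_r$, and $w$ must contain it entirely; since $\ell_1\ge 1$ this gives $|w|\ge |T|-1+2m_r$. For $|T|=\sum_{j=1}^{r}|B_j|$: Lemma~\ref{lem:no of equal 2FS}, applied to the canonical factorization of the $j$-th block's FS-double square, bounds $|B_j|$ by $|lcp(x_1^{(j)}x_2^{(j)},x_2^{(j)}x_1^{(j)})|+1$, hence by a small fraction of $m_j$. The decisive additional point — and the step I expect to be the main obstacle — is that no block after the first can be long: for $j\ge 2$ the first square of $B_j$ inherits, from the rigid shape of $SQ_2^2$ displayed after Definition~\ref{def:delta}, a long internal period coming from block $j-1$, and one must show that this keeps $|lcp(x_1^{(j)}x_2^{(j)},x_2^{(j)}x_1^{(j)})|$ — and therefore $|B_j|$ — bounded by roughly the period length of $B_{j-1}$, which is far smaller than $m_j$. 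This is what prevents the later, small blocks from contributing many $2$'s after the denominator has already paid the $2m_r$ cost.

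Combining the two estimates expresses $|T|/|w|$ as an explicit function of $r$, of the exponents $p_1,p_2$ of the first block, and of the lengths $|x_1^{(1)}|,|x_2^{(1)}|$. Enlarging $r$ only makes the denominator grow faster (via $m_r\ge 2^{r-1}m_1$) while the extra blocks remain small, so the extremal configuration is $r=2$ with $B_1$ taken as long as the equal-square analysis of Lemma~\ref{lem:max_equal} permits and $B_2$ as short as the unequal transition forces. Carrying out this last optimization as a case split on whether $p_1=p_2$ in $B_1$, mirroring the proof of Lemma~\ref{lem:max_equal}, yields $|T|/|w|\le \frac{1}{15}$, as required.
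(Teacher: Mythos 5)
Your block decomposition with geometrically growing roots is a sensible skeleton, but the argument breaks exactly at the step you yourself flag as the main obstacle: the claim that every block after the first is short, with $|B_j|$ controlled by the period of $B_{j-1}$ rather than by a constant fraction of $m_j$. This claim is nowhere established, and it is not how the paper reasons; the paper's proof assumes the opposite. There, after each unequal transition one applies Lemma~\ref{lem:max_equal} to the \emph{new}, larger FS-double square, so the next block may again contain up to $\frac{|SQ_i|}{7}$ further equal FS-double squares, a quantity proportional to the current square length, not to the previous period. The constant $\frac{1}{15}$ then does not come from later blocks being negligible but from a different accounting: each round consists of at most $\frac{|SQ_i|}{7}$ equal extensions followed, via the inequality $|SQ_2|>2|SQ_1|$ of Lemma~\ref{lem:peoperties of 2FS}, by appending roughly $2|SQ_i|$ letters to create the next unequal 2FS square; writing $S$ for the sum of the geometric sequence of square lengths $10,20,40,\dots$, the ratio tends to $\frac{\frac{1}{7}S}{\frac{1}{7}S+2S}=\frac{1}{15}$ as the number of rounds grows. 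So the paper's bound is the limit of a many-round geometric series, not the outcome of a two-block extremal configuration.

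Even granting the estimates you do have, the arithmetic does not deliver $\frac{1}{15}$, so the final ``optimization yields $\frac{1}{15}$'' is an assertion rather than a consequence. Lemma~\ref{lem:max_equal} gives per block only $|B_j|\le \frac{1}{7}\bigl(2m_j+|B_j|\bigr)$, i.e.\ $|B_j|\lesssim \frac{m_j}{3}$. Take $r=2$: if $|B_2|$ is negligible and $m_2>2m_1$, your inequality $|w|\ge |T|-1+2m_2$ gives only $\frac{|T|}{|w|}\lesssim \frac{m_1/3}{m_1/3+4m_1}=\frac{1}{13}$, while if $|B_2|$ is allowed to be of order $\frac{m_2}{3}$ the same estimates give only about $\frac{1}{5}$. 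In either case the bound obtainable from your stated inequalities is weaker than $\frac{1}{15}$, and to do better you would have to prove the unproven structural claim about short later blocks (or adopt the paper's charging scheme, in which each unequal transition is charged with the full length of the new square and the rounds are summed as a geometric series). As it stands, the proposal has a genuine gap at its decisive step and an unsubstantiated final optimization.
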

\begin{proof} The length of $T$ can be increased by adding a new 2FS square.
	Every new equal 2FS square increments the value of both $|T|$ and $|w|$ by $1$. This improves the value of $\frac{|T|}{|w|}$. However, it is not always possible to introduce an equal 2FS square (see Lemma \ref{lem: max equal 2FS}) and, therefore, an unequal 2FS square is required to get a longer $T$. Unlike an equal 2FS square, a new unequal 2FS square decreases the value of $\frac{|T|}{|w|}$. To understand this, suppose an unequal 2FS square begins at location one where $SQ_1^2$ and $SQ_2^2$ are two consecutive FS-double squares with shorter squares $sq_1^2$ and $sq_2^2$ respectively. Lemma \ref{lem:peoperties of 2FS} gives the relation $|SQ_2|>2|SQ_1|$. Accordingly, $SQ_1^2$ is appended 
	by a word containing at least $2|SQ_1|$ letters to make $s_2=2$. Thus, the value of $\frac{|T|}{|w|}$ decreases significantly after introducing an unequal 2FS square. So, we can obtain the best ratio from the word that has maximum equal 2FS squares and some unequal 2FS squares. 
	
	Given a word with $s_1=s_2=\cdots=s_i=2$ such that the location $i$ starts with an FS-double square $SQ_i^2$. From Lemma \ref{lem:max_equal}, the sequence of 2's can be extended to get at most $\frac{|SQ_i|}{7}$ new equal 2FS squares. An unequal 2FS square must be introduced to continue the sequence of 2's further.
	The ratio $\frac{|T|}{|w|}$ for the smallest FS-double square $w=(abaab)^2$ is $\frac{1}{10}$. We use the above method to extend $w$. The respective $\frac{|T|}{|w|}$ obtained after introducing a new unequal 2FS square results into the following sequence.
	\begin{align*}
	&\frac{1+\frac{10}{7}+1}{10+\frac{10}{7}+(10+10)}, \frac{1+\frac{10}{7}+1+\frac{20}{7}+1}{10+20+\frac{10}{7}+\frac{20}{7}+(20+20)},\\[10pt]
	&\frac{1+\frac{10}{7}+1+\frac{20}{7}+1+\frac{40}{7}+1}{10+20+40+\frac{10}{7}+\frac{20}{7}+\frac{40}{7}+(40+40)},\cdots
	\end{align*}
	The $n^{th}$ term of the above sequence is
	$$\frac{(n+1)+\frac{1}{7}*(10+20+40+\cdots+10*2^n)}{\frac{1}{7}*(10+20+40+\cdots+10*2^n)+(10+20+40+\cdots+10*2^{n+1})}$$
	The highest possible value for $\frac{|T|}{|w|}$ is obtained by simplifying the $n^{th}$ term as follows.
	\begin{align*}
	\frac{|T|}{|w|}&=\frac{(10+20+\cdots+2^n*10)+7n+7}{(10+20+\cdots+2^n*10)+7(10+20+\cdots+2^{n+1}*10)}\\[10pt]
	&=\frac{10*2^n-10+7n+7}{10*2^n-10+14*10*2^n-70}
	=\frac{(10*2^n)+7n-3}{15*(10*2^n)-8}\leq \frac{1}{15}
	\end{align*}
\end{proof}

\begin{theorem}
	If $T$ is the longest sequence of $s_i=2'$s in a word $w$, then $|T|<\frac{|w|}{7}$.
\end{theorem}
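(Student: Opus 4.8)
The plan is to deduce the bound from the two length‑ratio estimates already established, namely Lemma~\ref{lem:max_equal} and Lemma~\ref{lem:Equal and unequal}, after a structural case split on the run $T$.

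First I would observe that $T$ occupies a maximal block of consecutive locations, each of which begins with an FS-double square, so every pair of neighbouring FS-double squares in this block constitutes a 2FS square in the sense of Section~\ref{sec:structure of 2FS}. By Theorem~\ref{thrm:2FS} each neighbouring pair is either an equal 2FS square or an unequal 2FS square, which yields a dichotomy: either \emph{(i)} every neighbouring pair in $T$ is an equal 2FS square, or \emph{(ii)} at least one neighbouring pair in $T$ is an unequal 2FS square.

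In case \emph{(i)}, neighbouring longer roots have equal length and hence are conjugates (Theorem~\ref{thrm:equal2FS} together with Lemma~\ref{lem:EqualLenConseSq}); since conjugacy is an equivalence relation, \emph{all} the longer roots occurring along $T$ are mutually conjugate, so $T$ is exactly the kind of run bounded in Lemma~\ref{lem:max_equal}, giving $|T|/|w|\le\tfrac17$. I would then upgrade this to a strict inequality by inspecting the two extremal fractions appearing in the proof of that lemma: each of them equals $1/7$ only if a linear identity in $|x_1|$ and $|x_2|$ holds, and none of these identities can hold for positive integers $|x_1|,|x_2|$ --- for instance, in the $p_1=p_2$ subcase equality forces $(4-2p_1-2p_2)|x_1| = 2(p_1+p_2)|x_2|+6$ (left side $\le 0$ and right side $>0$ once $p_1=p_2\ge1$, and $4|x_1|=6$ when $p_1=p_2=0$), and in the $p_1>p_2$ subcase it forces $|x_1|=-6$. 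Hence $|T|/|w|<\tfrac17$ in case \emph{(i)}. In case \emph{(ii)}, the presence of an unequal 2FS square in $T$ places us in the hypothesis of Lemma~\ref{lem:Equal and unequal}, so $|T|/|w|\le\tfrac1{15}<\tfrac17$.

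Both cases therefore give $|T|/|w|<\tfrac17$, that is, $|T|<|w|/7$, which is the claim. The only delicate step is the strictness in case \emph{(i)}: one must confirm that the supremum $1/7$ in Lemma~\ref{lem:max_equal} is only approached (as $|x_1|$ grows) and is never attained. Everything else follows immediately from the structure results of Sections~\ref{sec:structure of 2FS} and~\ref{sec:longest sequence of FS}; in particular it is harmless that $T$ need not begin at location $1$, since the two lemmas are stated for an arbitrary host word $w$ and enlarging $w$ only lowers the ratio.
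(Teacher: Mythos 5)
Your proposal is correct and follows essentially the same route as the paper: a case split resolved by Lemma~\ref{lem:max_equal} (all neighbouring pairs equal 2FS squares) and Lemma~\ref{lem:Equal and unequal} (at least one unequal pair, which also absorbs the purely-unequal case the paper computes separately). Your explicit verification that the extremal fractions in Lemma~\ref{lem:max_equal} never attain $\tfrac17$ for positive integer $|x_1|,|x_2|$ is a welcome addition, since it supplies the strictness claimed in the theorem while the paper's own proof concludes only $|T|\le \frac{|w|}{7}$.
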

\begin{proof} The computation in Lemmas \ref{lem:max_equal} and \ref {lem:Equal and unequal} shows that the best value of $\frac{|T|}{|w|}$ where $T$ contains either equal length FS-double squares or a combination of consecutive FS-double squares that can be equal or unequal 2FS squares. The best value is obtained in the former case, that is, $\frac{1}{7}$. We compare this value with the length of the only remaining possible sequence of $T$ where every two consecutive FS-double squares follow the structure of an unequal 2FS square. 
	
	Suppose $SQ_1^2$ and $SQ_2^2$ results into an unequal 2FS square at the beginning of a word. Then, $|SQ_2|\geq 2|SQ_1|$ (see Lemma \ref{lem:peoperties of 2FS}). Thus, to introduce a new unequal 2FS square at location $i$, it is required to append at least $2|SQ_i|$ letters to the FS-double square $SQ_i^2$. Allowing only unequal 2FS squares in $T$, we compute the ratio of the length of the longest sequence of 2's in a word to its word length as follows.
	\begin{align*}
	&\frac{1}{10}, \frac{1+1}{10+20}, \frac{1+1+1}{10+20+40}, \cdots, \frac{n}{10*(2^{n}-1)}, \cdots
	\end{align*}
	The value of the ratio decreases as $n$ increases and the ratio has the maximum value of $\frac{1}{15}$ for $n=2$. We ignore the value with $n=1$ as the sequence will have only one FS-double square. Therefore, $|T|\leq \frac{|w|}{7}$.
\end{proof}

\section{Conclusion}\label{sec:conclusion}
We have investigated the sequence of FS-double squares and introduced the term 2FS square for two consecutive FS-double squares. The 2FS squares are characterized into two types viz.\ equal and unequal 2FS squares. In equal 2FS square, a letter is added to the existing FS-double square to obtain a new FS-double square. In contrast, an FS-double square is appended by a word of its own length to yield an unequal 2FS square. It is shown that in spite of the long suffixes, an unequal 2FS square can be used to introduce any number of consecutive FS-double squares in a word. We have compared the maximum number of successive FS-double squares in a word with its length. The equal length FS-double squares produce a word, $w$, with the longest sequence of $s_i=2'$s, $T$, and the ratio $\frac{|T|}{|w|}$ converges to $\frac{1}{7}$.

The number of distinct squares in a word depends on the non-zero $s_i$ indexes. A possible direction to explore the square-conjecture is to study the structure of words with a large number of indexes with $s_i = 2$. We believe that the structure of a 2FS square can be used to explore the square-conjecture.

\bibliographystyle{splncs04}
\bibliography{references}

\begin{thebibliography}{1}
\providecommand{\url}[1]{\texttt{#1}}
\providecommand{\urlprefix}{URL }
\providecommand{\doi}[1]{https://doi.org/#1}

\bibitem{bai16NPL}
Bai, H., Franek, F., Smyth, W.F.: The new periodicity lemma revisited. Discrete
  Applied Mathematics  \textbf{212},  30--36 (2016)

\bibitem{Cro81}
Crochemore, M.: An optimal algorithm for computing the repetitions in a word.
  Information Processing Letters  \textbf{12}(5),  244--250 (1981)

\bibitem{deza15}
Deza, A., Franek, F., Thierry, A.: How many double squares can a string
  contain? Discrete Applied Mathematics  \textbf{180},  52--69 (2015)

\bibitem{FS98}
Fraenkel, A.S., Simpson, J.: How many squares can a string contain? Journal of
  Combinatorial Theory, Series A  \textbf{82}(1),  112--120 (1998)

\bibitem{ilie05}
Ilie, L.: A simple proof that a word of length n has at most 2n distinct
  squares. Journal of Combinatorial Theory, Series A  \textbf{112},  163--164
  (2005)

\bibitem{Loth05}
Lothaire, M.: Applied Combinatorics on Words, vol.~105. Cambridge University
  Press (2005)

\bibitem{mai21}
Patawar, M., Kapoor, K.: Characterization of dense patterns having distinct
  squares. In: Conference on Algorithms and Discrete Applied Mathematics. pp.
  397--409. Springer (2021)

\end{thebibliography}

\end{document}